\newtheorem{theorem}{Theorem}
\newtheorem{lemma}[theorem]{Lemma}
\newtheorem{proposition}[theorem]{Proposition}
\begin{document}
\title[Sticky particles]{Sticky particles and stochastic flows}
\author{Jon Warren}
\address{Department of Statistics, University of Warwick, Coventry CV4 7AL, UK}
\email{j.warren@warwick.ac.uk}
\date{}

\begin{abstract}
Gaw\c{e}dzki and Horvai  have studied a model for the motion of particles carried in  a turbulent fluid and  shown that in a limiting regime with  low levels of viscosity and molecular diffusivity, pairs of particles exhibit the phenomena of stickiness when they meet. In this paper we characterise the  motion of an arbitrary number of particles in a   simplified version of their model.
\end{abstract}

\keywords{  sticky Brownian motion, stochastic flow of kernels, advection-diffusion equation.}
\subjclass[2000]{ Primary 60K35 ; secondary 60F17, 60J60.}

\maketitle

\section{Introduction}

The motivation for this paper comes from a work by Gaw\c{e}dzki and Horvai, \cite{gh}, in which the authors study a model for the motion of particles carried in  a turbulent fluid. The trajectories of two distinct particles $\bigl(X_{1}(t), t\geq 0\bigr) $ and  $\bigl( X_{2}(t), t \geq 0\bigr)$ are each described by a Brownian motion in ${\mathbf R}^d$ with a covariance of the form
\begin{equation}
\langle X_{1}, X_{2} \rangle (t)=\int_0^t \psi\bigl(  X_{1}(s)- X_{2}(s)\bigr) ds.
\end{equation}
The $d \times d$ matrix valued function $\psi$ is invariant under the natural action of the orthogonal group and consequently the inter-particle distance $\| X_{1}(t)- X_{2} (t)\|$ is  a diffusion process on ${\mathbf R}_+$. For different choices of the covariance function $\psi$, different qualitative behaviours are observed, and these correspond to different boundary conditions at $0$ for the diffusion describing the inter-particle distance. See also Le Jan and Raimond \cite{lejan1} for a description of these phases. Gaw\c{e}dzki and Horvai study the case where $0$ is both a entrance and exit boundary point, and  the function $\psi$ is not  smooth at the origin. They then introduce a  viscosity effect  acting at small scales  by replacing $\psi$ by a smooth covariance function  obtained by  smoothing $\psi$ in  a neighbourhood of the origin. Particles moving in this regularized flow never meet, and $0$ is now a natural boundary point for the diffusion describing the inter-particle distance. They then further vary the model and consider particles whose motion is affected by molecular diffusivity, modelled by adding, for each particle, a small independent Brownian perturbation to the motion of the flow. If the additional  diffusivity  and the scale  at which viscosity acts both are taken to zero in an appropriate balance then Gaw\c{e}dzki and Horvai show that the inter-particle distance $\| X^{(1)}(t)- X^{(2)} (t)\|$ converges to a diffusion on ${\mathbf R}_+$ with the boundary point being sticky: that is a regular boundary point at which the diffusion spends a strictly positive amount of time.

Sticky boundary behaviour was first identified by Feller, as described in the article \cite{pes}. Subsequently  the process which is a Brownian motion on ${\mathbf R}_+$ with a sticky boundary at $0$ was studied as an example of a stochastic differential equation with no strong solution, see Chitashvili, \cite{ch} and 
Warren \cite{w1}, and  recent work by Engelbert and  Peskir \cite{ep} and Bass \cite{bass}. Stochastic flows in which the inter-particle distance evolves as a sticky Brownian motion have been studied by Le Jan and Lemaire \cite{lejan3}, by Howitt and Warren \cite{hw1} and \cite{hw2}, and by Schertzer, Sun and Swart, \cite{sss}.

In this paper we study a simplification of the  Gaw\c{e}dzki-Horvai model. Our goal is to address, in this simplified setting, the question raised by Gaw\c{e}dzki and Horvai  of characterizing  the behaviour of $N$ particles. We take the dimension  of the underlying space to be $d=1$, and the motion of distinct particles, in the absence of viscosity or molecular diffusivity, to be given by Brownian motions which are independent of one another until the particles meet. 

Let $\psi$ be a real-valued, smooth, positive definite function on ${\mathbf R}$, satisfying $\psi(0)=1$, $|\psi(x)|<1$ for $x \neq 0$, and $\psi(x) \rightarrow 0$ as $|x|\rightarrow \infty$.
Define  the constant $a$, which we assume is strictly positive, via 
\begin{equation}
\label{ass}
 \frac{1-\psi(x)}{x^2}  \rightarrow a^2 \text{ as }
x \rightarrow 0.
\end{equation}

%Let $(b_n; n \geq 1)$ be a sequence of smooth positive definite covariance functions on
%${\mathbf R}$  satisfying the following conditions:
%\begin{equation}
%b_n(x) \rightarrow 0, \text{ as } n \rightarrow \infty
%\end{equation}
%uniformly for  $x$ in compact subsets of ${\mathbf R}\setminus \{0\}$; 
%\begin{equation}
%\label{ass2}
%x^{-2} \bigl(b_n(x/n)- 1 +a^2x^2\bigr)  \rightarrow 0  \text{ as }
%x \rightarrow 0 \text{ and } n \rightarrow \infty,
%\end{equation}
%for some constant $a>0$;
%\begin{equation} 
%\label{ass3}
%\sup_n \sup_{|x|\geq  \epsilon/n } |b_n(x)| <1,
%\end{equation}
%for every $\epsilon>0$. For example we may construct $b_n$ from a
%single covariance function by scaling, $ b_n(x)= \psi(nx)$ where 
%$1-\psi(x)$ is asymptotically equal to $a^2x^2$ as $ x$ tends to zero,
%and $\psi(x)$ decreases to zero as $|x|$ tends to infinity.

 For each $n$ there exists a smooth flow of
Brownian motions associated with the scaled covariance function $\psi(nx)$, the $N$ point motion of which has generator
\begin{equation}
\label{smoothgen}
\frac{1}{2}\sum_{i,j} \psi\bigl(n(x_i-x_j)\bigr))\frac{\partial^2 }{\partial
  x_ix_j}.
\end{equation}
As $n$ tends to infinity the covariance  functions $ \psi(nx)$ converge to the
singular covariance $ 1_0( x)$, and  correspondingly,  the $N$-point motions  associated with the flows
converge to systems of coalescing Brownian motions. 

Fix a constant $b >0$  and for $n \geq 1$, we define generators  
\begin{equation}
\label{pertgen}
{\mathcal G}^{N,n}= \frac{1}{2}\sum_{i, j}\psi\bigl(n(x_i-x_j)\bigr)\frac{\partial^2
}{\partial x_ix_j}+ \frac{b^2}{2n^2} \sum_{i}\frac{\partial^2
}{\partial x_i^2}
\end{equation} 
which are  perturbations of the generators
\eqref{smoothgen} by
addition of the Laplacian  with co-efficient   $b^2/2n^2$. This     works
against coalescence by giving each particle in the flow  a small
amount of independent diffusivity.
As a consequence paths of particles in the flow can cross and the
$N$-point motions are   no longer associated with flow of maps. 

 The  two effects:  approximating a coalescing flow by smooth flows, and
 adding diffusivity,  are in balance   as we pass to the limit,
as can be seen by the following analysis of the $2$-point motion.  
Let $(X_1,X_2)$ be the two point motion with generator ${\mathcal
G}^{2,n}$. It is enough to consider the difference $Z(t)=X_1-X_2(t)$
which is a diffusion on the real line in natural scale and with speed
measure
\begin{equation}
\label{speed}
m_n(dz)=\frac{dz}{1+b^2n^{-2}-\psi(nz)}
\end{equation}
As $n$ tends to infinity $m_n$ weakly converges to the measure
$m(dz)=dz+\theta^{-1}\delta_0(dz)$ where the constant $\theta$ is given
by
\begin{equation}
\label{theta}
\theta^{-1}= \int_{-\infty}^\infty \frac{dz}{b^2+a^2z^2}=\frac{\pi}{ab}.
\end{equation}
Thus the limiting diffusion describing $|X_1-X_2|$ is a  sticky Brownian
with the parameter $\theta$ describing the degree of stickiness at $0$, and the limit of the two point motion is determined by  this, together with $X_1$ and $X_2$ each being Brownian
motions.

This leaves open the limiting behaviour of the perturbed $N$-point
motions for $N \geq 3$.  Consistent families of diffusions in
${\mathbf R}^N$ whose components are Brownian motions evolving as
independent Brownian motions whenever they are unequal were studied
in \cite{hw1}. For such processes there are times at which many
co-ordinates co-incide and it is necessary to describe  the sticky 
behaviour at such times.   This is specified by families of
of non-negative co-efficients  $(\theta(k:l); k,l \geq 1)$.  Thinking of the $N$-point motion as a system of $N$
particles $\theta(k:l)$ gives the rate, in an excursion theoretic
sense,  at which a clump of  $k+l$ particles separates into two clumps
one consisting of $k$  particles and the other of $l$
particles. The result of this paper is the following
identification of these co-efficients for our model.

\begin{theorem}
\label{main}
The $N$ point motions with generators ${\mathcal G}^{N,n}$ converge in
law as $n$ tends to infinity to a family of sticky Brownian motions 
associated to the family of parameters  $(\theta(k:l); k,l \geq 1)$
given by
\[
\theta(k:l)= \frac{ab}{2\sqrt{\pi}}\int_{\mathbf R}\int_{{\mathbf R}^{k+l}}
\frac{e^{-\|x\|^2/2}}{(2\pi)^{(k+l)/2}} {\mathbf 1}
(x_1,x_2, \ldots x_k <z< x_{k+1}, \ldots, x_{k+l} )dxdz
\]
\end{theorem}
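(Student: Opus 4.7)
My plan is to proceed in three stages: tightness of the $N$-point motions, identification of any subsequential limit as a sticky Brownian motion in the Howitt--Warren class, and explicit computation of the coefficients $\theta(k:l)$ by a microscopic analysis at the scale where the viscosity in $\psi(n\cdot)$ acts.

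Tightness in $C([0,T],\mathbf{R}^N)$ is immediate from $\psi(0)=1$, since each coordinate is a Brownian motion uniformly in $n$, giving the Kolmogorov moment bounds. For any subsequential limit $(X_1,\ldots,X_N)$, the two-point analysis carried out in \eqref{speed}--\eqref{theta} shows that each pair of coordinates consists of Brownian motions that evolve independently on the open set where they differ, and that the absolute difference is a sticky Brownian motion with parameter $ab/\pi$ at coincidence. A compatibility argument across pairs, triples and larger clusters then places the limit in the Howitt--Warren class of sticky Brownian motions, whose law is determined by the family $(\theta(k:l))_{k,l\geq 1}$.

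The main task is to compute $\theta(k:l)$. The idea is to zoom in on the scale where viscosity acts by setting $y_i = n(x_i - \bar x)$ and $s = n^2 t$. Under this rescaling, the $(k+l)$-point motion converges to the diffusion on $\mathbf{R}^{k+l}$ with generator
\[
\mathcal{L} = \tfrac{1}{2}\sum_{i,j}\psi(y_i-y_j)\partial_{y_i y_j} + \tfrac{b^2}{2}\sum_i \partial_{y_i}^2.
\]
Starting from $y_1=\cdots=y_{k+l}=0$, each coordinate $y_i(s)$ is a standard Brownian motion in $s$-time, and pairs decouple as soon as their separation exceeds the effective range of $\psi$; consequently the normalised vector $(y_i(s)/\sqrt{s})_{i\leq k+l}$ converges in law as $s\to\infty$ to a vector of i.i.d.\ standard Gaussians. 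By the Howitt--Warren excursion-theoretic characterisation, and after fixing the overall normalisation against the two-point case \eqref{theta}, one expects to identify
\[
\theta(k:l) = \frac{ab}{2\sqrt{\pi}}\lim_{s\to\infty}\frac{1}{\sqrt{s}}\int_\mathbf{R} \mathbf{P}\bigl(y_1(s),\ldots,y_k(s)<z<y_{k+1}(s),\ldots,y_{k+l}(s)\bigr)\,dz.
\]
Substituting $z = u\sqrt{s}$ and taking $s\to\infty$ turns the right-hand side into the Gaussian integral stated in the theorem.

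The hard part will be the rigorous passage from the prelimit dynamics to this excursion-rate formula, which requires controlling two limits at once: the $n\to\infty$ limit that turns the smooth prelimit into a sticky process with a well-defined local time at multiple coincidences, and the $s\to\infty$ limit that extracts the Gaussian integral from the long-time behaviour of $\mathcal{L}$. The cleanest route is probably via the martingale problem of Howitt--Warren: choose a test function adapted to the $(k:l)$ splitting (a smoothed version of $\int_\mathbf{R} \mathbf{1}(x_1,\ldots,x_k<z<x_{k+1},\ldots,x_{k+l})\,dz$), apply It\^o's formula to the $(k+l)$-point prelimit, and analyse the contribution of the short-range part of $\psi$ in the $y$-scale. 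Controlling the error terms---especially showing that excursions involving complicated interleavings contribute negligibly, and that the asymptotic independence of the $y$-coordinates is quantitative enough to commute the two limits---is where the bulk of the technical work will go.
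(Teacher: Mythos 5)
There is a genuine gap: what you have written is essentially a (plausible) heuristic plan, close in spirit to the paper's own non-rigorous Section 2, but the two steps that actually constitute the proof are missing or asserted. First, the claim that ``a compatibility argument across pairs, triples and larger clusters places the limit in the Howitt--Warren class'' begs the question: pairwise sticky behaviour does not determine the higher-order splitting rates (consistent families with the same two-point motion form an infinite-dimensional family, parametrised by a splitting measure $\nu$), and even membership in the class means verifying the ${\mathcal A}^\theta_N$-martingale problem for every $f\in L_N$, which is exactly the hard content of the theorem. The paper's route is quite specific here: it approximates a convex $f\in L^N_0$ by $g_n(x)=n^{-2}g(n^2x)$, where $g$ is the \emph{convex} solution of ${\mathcal H}^N g=c(f)$ with linear growth matching $f$ (Proposition \ref{prop}, imported from \cite{w2}); convexity makes ${\mathcal G}^{N,n}g_n\geq 0$ everywhere and yields the submartingale statement of Lemma \ref{essentiallemma} on the main diagonal, with the constant $c(f)=\gamma ab\int_{S^{N-2}}f$ being precisely ${\mathcal A}^\theta_N f(0)$ — this is where the Gaussian formula for $\theta(k:l)$ actually enters. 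The passage from this diagonal-only statement to the full martingale problem is then done by induction on $N$, using the Meyer decomposition and the localisation Lemma \ref{local} to identify the compensator off the diagonal, plus the occupation-time estimate of Lemma \ref{nearzero}. None of these ingredients (the PDE construction of the test functions, the convexity/submartingale trick, the induction) appear in your plan, and your closing paragraph simply defers them.

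Second, your microscopic scaling is at the wrong scale and is mis-transformed. Under $y_i=n(x_i-\bar x)$, $s=n^2t$, the generator \eqref{pertgen} becomes $\tfrac12\sum_{i,j}\psi(y_i-y_j)\partial_{y_iy_j}+\tfrac{b^2}{2n^2}\sum_i\partial_{y_i}^2$: the molecular diffusivity coefficient is $b^2/(2n^2)\to 0$, not $b^2/2$, so your limiting operator ${\mathcal L}$ is not the limit of anything here, and at this scale particles started together never separate. The scale at which viscosity and diffusivity balance is $|x|\sim n^{-2}$ with time $\sim n^{-2}$, where $\psi$ enters only through the curvature constant $a$ and the relevant operator is ${\mathcal H}^N$ of \eqref{lineargen}; the spherical symmetry of ${\mathcal H}^N$ (uniform exit law from small spheres), combined with near-coalescing behaviour at intermediate scales and a renewal argument, is what produces the Gaussian-type integral with the prefactor involving $\gamma$ of \eqref{gamma}. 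Relatedly, even under your ${\mathcal L}$ the coordinates would have variance rate $1+b^2$, not $1$, and your proposed excursion-rate identity for $\theta(k:l)$ (the $s\to\infty$ limit normalised ``against the two-point case'' \eqref{theta}) is neither defined precisely nor derived from the Howitt--Warren characterisation — establishing such an identification is the theorem itself, not an input to it.
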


The form of the parameters $\theta(k:l)$ given in this result is highly suggestive of the underlying mechanisms at work. The variables $x_1, \dots,  x_{k+l}$  chosen according to a Gaussian measure can be thought of as the positions of  a cluster of $k+l$ particles  experiencing independent diffusivity, and the variable $z$  represents a ``singularity'' in  the underlying flow that causes the cluster to separate into two. Of course this is far from being rigorous.
 
To give  Theorem \ref{main} a precise meaning we must specify the law of the
family of sticky Brownian motions associated  to the family of
parameters  $(\theta(k:l); k,l \geq 1)$. We do this by means of a
well-posed martingale problem, following \cite{hw1}.  

Suppose $\bigl(\theta(k:l); k, l \geq 1)$ is a family of nonnegative
parameters satisfying the consistency property 
\begin{equation}
\theta(k:l)=\theta(k+1:l)+\theta(k:l+1)
\end{equation}
 For our purposes in this paper we may also assume  the symmetry
 $\theta(k:)= \theta(l:k)$.   
We now recall the main result from \cite{hw1} concerning the
characterization of consistent families of sticky Brownian motions.

We begin by  partitioning  ${\mathbf R}^N$ into cells. A cell $E \subset {\mathbf R}^N$ is determined by some weak total ordering 
$\preceq$ of the $\{1,2,\ldots N\}$ via 
\begin{equation}
E= \{ x \in {\mathbf R}^N: x_i \leq x_j \text{ if and only if } i \preceq j \}.
\end{equation}
Thus $\{x \in {\mathbf R}^3: x_1 =x_2=x_3\}$, $\{x \in {\mathbf R}^3: x_1 <x_2=x_3\}$ and $\{ x\in {\mathbf R}^3: x_1>x_2>x_3\}$ are 
three of the thirteen distinct cells into which ${\mathbf R}^3$ is partitioned.

Suppose that $I$ and $J$ are disjoint subsets of $\{1,2,\ldots ,N\}$
with both $I$ and $J$ non-empty. With such a pair we associate a 
vector $v=v_{I,J}$ belonging to ${\mathbf R}^N$ with components given by
\begin{equation}
\label{vdef}
v_i= \begin{cases} 0 & \text{ if $i \not\in I\cup J$,} \\
+1 & \text{ if $i \in I$,} \\
-1 & \text{ if $i \in J$.}
\end{cases}
\end{equation}
We  associate with each point $x\in {\mathbf R}^N$ certain vectors of this form. To this end note that each point $x \in {\mathbf 
R}^N$ determines a partition $\pi(x)$ of $\{1,2,\ldots N\}$ such that $i$ and $j$ belong to the same component of 
$\pi(x)$ if and only if $x_i=x_j$. Then to each  point  $x \in {\mathbf R}^N$ we associate the set of vectors, denoted by ${\mathcal V}(x)$, which 
consists of
every vector of the form $v=v_{IJ}$ where $I\cup J$ forms  one component of the partition $\pi(x)$.

Let $ L_N$ be the space of real-valued functions defined on ${\mathbf R}^N$ which are continuous, and whose restriction to each cell is 
given by a linear function. Given a set of parameters $\bigl(\theta(k:l); k,l \geq 0\bigr)$ we define the operator ${\mathcal A}^\theta_N$ from $L_N$ to the space of 
real valued functions on ${\mathbf R}^N$ which are constant on each cell by
\begin{equation}
{\mathcal A}^\theta_N f (x) = \sum_{ v \in {\mathcal V}(x)} \theta(v) \nabla_{v} f(x).
\end{equation}
Here on the righthandside $\theta(v)= \theta(k:l)$ where $k=|I|$ is the number of elements in $I$ and $l=|J|$ is the number of elements in $J$ for $I$ and $J$  determined by $v=v_{IJ}$.  $\nabla_v f(x)$ denotes  the (one-sided) gradient of $f$ in the direction $v$ at the point $x$, that is 
\begin{equation}
\nabla_v f(x)= \lim_{\epsilon \downarrow 0}  \frac{1}{\epsilon} \bigl( f( x+\epsilon v)- f(x) \bigr).
\end{equation}

We say an ${\mathbf R}^N$-valued stochastic process  $\bigl( X(t); t
\geq 0 \bigr)$ solves the ${\mathcal A}^\theta_N$-martingale problem
 if 
for  each $f \in L_N$,
\[
f\bigl(X(t)\bigr)- \int_0^t {\mathcal A}_N^\theta f \bigl(X(s)\bigr) ds \text{ is a martingale,}
\]
relative to some common filtration, and the bracket between co-ordinates $X_i$ and $X_j$ is given by
\[
\langle X_i,X_j\rangle (t)=\int_0^t {\mathbf 1}{( X_i(s)=X_j(s))} ds  \qquad \text{ for }  t\geq 0.
\]
In particular $\langle X_i \rangle(t)=t$.
According to the main result of  \cite{hw1},  for any  given starting
point $x \in {\mathbf R^N}$, a solution to the ${\mathcal A}^\theta_N$-martingale problem exists and its law is unique. It is
a process with this law that we refer to as a family of $N$ sticky
Brownian motions associated with  the parameters $\bigl( \theta(k:l);k, l\geq 1)$.

\section{Heuristic derivation of exit probabilities}

Let us write $\bigl(X(t); t \geq 0\bigr)$  for the co-ordinate process
on $N$ dimensional path space, and  we will write  $\hat{X}(t)$ for the
projection  $X(t)$ onto the hyperplane
${\mathbf R}^N_0=\{x \in {\mathbf R}^N: \sum x_i=0\}$.  
Suppose that $X$
when governed by a  probability
measure ${\mathbf P}^{N,\theta}_x$ evolves as the family of  $N$ mutually  sticky Brownian motions 
associated with a parameters  $\theta=(\theta(k:l); k,l \geq 1)$
started from $x \in {\mathbf R}^N$.
Consider, for $\epsilon>0$,  the neighbourhood $D(\epsilon)$ of the origin $0$ in
${\mathbf R}^N_0$ given by 
\begin{equation}
D(\epsilon)=\{x \in {\mathbf R}^N_0: \max_{i,j} (x_i-x_j) \leq \epsilon\}.
\end{equation}
We know from \cite{hw1} that the exit distribution of $\hat{X}$ from 
$D(\epsilon)$ can, for small $\epsilon$,  be described in terms of the
$\theta(k:l)$ parameters. In fact if $T(\epsilon)$ denotes  the first time
that $\hat{X}$ leaves this set, we have
\begin{equation}
\label{theta1}
\lim_{\epsilon \downarrow 0} \frac{1}{\epsilon}{\mathbf E}^{N,\theta}_0 \bigl[ T(\epsilon) \bigr]=
\frac{1}{ 2\sum_{k=1}^{N-1} { N \choose k}  \theta(k:N-k) },
\end{equation}
and, for each cell $E$ that corresponds to a (ordered) partition of
$\{1,.2, \ldots, N\}$ into two parts having sizes $k$ and $l=N-k$,
\begin{equation}
\label{theta2}
\lim_{\epsilon \downarrow 0} {\mathbf P}^{N,\theta}_0 \bigl(
X(T(\epsilon)) \in E 
)= 
\frac{\theta(k:l)}{ \sum_{k=1}^{N-1} { N \choose k}  \theta(k:N-k)}
\end{equation}
Notice how this is consistent with the  idea that
$\theta(k,N-k)$ describes the rate at which a cluster of  $N$ particles
splits.

In view of these observations on the behaviour of sticky diffusions we
can reasonably expect to be able to identify the parameters
$\theta(k:l) $ arising in the limiting behaviour of our $N$ point
motions with generators \eqref{pertgen} by investigating how these processes,
for $n$ large,
leave neighbourhoods of the  origin. Interestingly very close to the
origin, at distances of the order $1/n^2$, the $N$ point motions  are
spherically symmetric, but at larger distances a coalescence effect
leads to exit distributions  concentrated  on  points
corresponding to the cluster of particles  splitting into two subclusters.  

We will suppose that $X$ when governed by probability measures
${\mathbf P}^{N,n}_x$  evolves
as a   diffusion with generator  ${\mathcal G}^{N,n}$
starting from $x\in {\mathbf R}^N$.  Notice that the generators ${\mathcal G}^{N,n}$ are invariant under shifts
$(x_1,x_2, \ldots x_N) \mapsto (x_1+h,x_2+h, \ldots x_N+h)$, and
consequently the projection $\hat{X}(t)$ of $X(t)$  is a diffusion
also. In view of \eqref{theta1} and \eqref{theta2} it is natural to study the exit
time and distribution of $\hat{X}$ from  $D(\epsilon)$ under ${\mathbf
  P}^{N,n}_0$ in order to determine the parameters $\theta(k:l)$
associated with the limiting $N$ point motion.  We will  estimate the
exit distribution (non-rigorously) by
approximating the behaviour of $\hat{X}$  on two different scales.

Let $B(r)$ denote  the ball of radius $r$ in ${\mathbf R}^N_0$, 
\begin{equation*}
B(r)= \{ x \in {\mathbf R}^N_0: \| x\| \leq r\}.
\end{equation*}
Now, for a fixed  small $\epsilon>0$,  the map $ x \mapsto \psi(x)$ is approximately quadratic for
$x \in (-\epsilon, \epsilon)$ and we use this to approximate  the covariance matrix of $\hat{X}$ in the ball $B(\epsilon/n)$. Observe that if the  matrix $A$ has entries $1-a^2(x_i-x_j)^2$ then for vectors $u,v \in {\mathbf R}^N_0$ we have $(u,Av)=2a^2(u,x)(v,x)$.
Consequently we can approximate $\hat{X}$ under ${\mathbf P}^{N,n}$
within the ball $B(\epsilon/n)$ as  $(n^{-2} Z(n^2t ); t \geq 0 )$ where $Z$ is a diffusion
with generator ${\mathcal H}^{N}$ given by, 
in spherical co-ordinates in ${\mathbf R}^{N}_0$,
\begin{equation}
\label{lineargen}
{\mathcal H}^N= a^2 r^2\frac{\partial^2}{\partial r^2}  +\frac{b^2}{2}
\nabla^2 = \left(\frac{b^2}{2}+a^2r^2 \right)\frac{\partial^2}{\partial r^2}+ \frac{(N-2)b^2}{2r}\frac{\partial}{\partial r} +\frac{b^2}{2 r^{2}}\Delta_{S^{N-2}}. 
\end{equation}
In particular,  the rescaled radial part of $\hat{X}$  
is   approximated as a 
diffusion on $(0,\infty)$ with  generator 
\begin{equation}
\label{radialgen}
{\mathcal H}^N_{\text{rad}}=\left(\frac{b^2}{2}+a^2r^2\right) \frac{d^2}{dr^2}+\frac{(N-2)b^2}{2r} \frac{d}{dr}.
\end{equation}
The expected  time taken for this diffusion to first reach a  level $r$  when started from $0$  is equal to  $f_0(r)$ where  $f_0$ is the increasing solution to
\[
{\mathcal H}^N_{\text{rad}} f_0=1,  \qquad\qquad  f_0(0)=0.
\]
 The function $f_0(r)$  is asymptotically equal to $ r/ (\gamma ab)$,  see \cite{w2}, where
\begin{equation}
\label{gamma}
\gamma=\sqrt{\frac{2}{ \pi}}\frac{\Gamma(N/2)}{\Gamma((N-1)/2)}= \frac{1}{\sqrt{\pi}}\int_{{\mathbf R}^{N-1}}  \frac{\|x\|e^{-\|x\|^2/2}}{(2\pi)^{(N-1)/2}} \; dx.
\end{equation} 
 Thus we have the estimate 
\begin{equation}
\label{timeball}
{\mathbf E}_0^{N,n}[ \text {exit time from $B(\epsilon/n)$ }] \approx  \frac{\epsilon}{n  \gamma ab}.
\end{equation}
 Moreover, because of the spherical symmetry of  ${\mathcal H}^N$, the exit
distribution from this ball is the uniform measure on sphere. 

We next consider $\hat{X}$ started from a  point $x$ on
the sphere of radius $\epsilon/n$ which we will assume has distinct
co-ordinates. Let $\sigma$ be the permutation so that
$x_{\sigma(1)}  >x_{\sigma(2)}> \cdots> x_{\sigma(N)} $, and denote by
$x^\sigma$ the vector $(x_{\sigma(1)},x_{\sigma(2)}, \cdots,  x_{\sigma(N)})$ .    Our second approximation  applies to $\hat{X}$ until it  first leaves the domain
$D(\epsilon) \setminus D( 1/(\epsilon n^2))$. If two particles  come close to each other, then they have a negligible probability of separating by a significant distance prior to the exit time $\tau$ from the domain. Thus we can treat 
 $\hat{X}$  similarly to  (the projection to ${\mathbf R}^N_0$) of a system of $N$ coalescing Brownian motions. In particular this means that if $\hat{X}$  exits via the outer part of the boundary then it does so with $\hat{X}^\sigma_{1}(\tau)-\hat{X}^\sigma_{N}(\tau) \approx \epsilon$. Consequently applying the optional stopping Theorem to the martingale $\hat{X}^\sigma_{1}(t)-\hat{X}^\sigma_{N}(t)$  gives rise to the estimate
\begin{equation}
{\mathbf P}^{N,n}_x \bigl( \hat{X} \text{ exits $D(\epsilon) \setminus
    D( 1/(\epsilon n^2))$ via the outer boundary }  \bigr)\approx   \frac{x^\sigma_{1}-x^\sigma_{N}}{\epsilon}.
\end{equation}
Moreover if $\hat{X}$ does exit via the outer boundary then as it does so there are only two clusters of particles (see Lemma \ref{coalesce} for the corresponding statement about coalescing Brownian motion), and applying the optional stopping Theorem to  $\hat{X}^\sigma_{k}(t)-\hat{X}^\sigma_{k+1}(t)$ gives
\begin{equation}
\label{splitdist}
{\mathbf P}^{N,n}_x \bigl( \hat{X}^\sigma_{i}(\tau) -
\hat{X}^\sigma_{i+1}(\tau) \approx 0 \text{ for $i \neq k$,  }  \hat{X}^\sigma_{k}(\tau) -
\hat{X}^\sigma_{k+1}(\tau) \approx \epsilon
  \bigr) \approx  \frac{x^\sigma_{k}-x^\sigma_{k+1}}{\epsilon}.
\end{equation}

We now make use of a  renewal argument. 
The diffusion with generator \eqref{lineargen} is ergodic, and
consequently we conclude that the process $\hat{X}$ spends all but a
negligible amount of time at  a distance of order $1/n^2$ from the
origin  prior to exiting $D(\epsilon)$. From this inner region it makes 
excursions to the sphere of radius $\epsilon/n$ and, each time it
does, it has a small probability of exiting $D(\epsilon)$ rather than
returning to the inner region. When it does return to distances of
order $1/n^2$ we can assume by mixing that it is starts afresh and forgets its history. Thus
$\hat{X}$ makes approximately a geometrically distributed number of
excursions  to the sphere of radius $\epsilon/n$ before exiting
$D(\epsilon)$, and we
conclude, neglecting  the time spent outside the ball $B(\epsilon/n)$, that the expected time to exit $D(\epsilon)$ is estimated by 
\begin{equation}
\frac{{\mathbf E}_0^{N,n}[ T_{B(\epsilon/n)}] }{
 {\mathbf E}_0^{N,n} \bigl[ {\mathbf P}^{N,n}_{X (T_{B(\epsilon/n)})} \bigl( \hat{X} \text{ exits $D(\epsilon) \setminus
    D( 1/(\epsilon n^2))$ via the outer boundary }  \bigr)\bigr]},
\end{equation}
where $T_{B(\epsilon/n)}$ denotes the first time of exiting the ball $B(\epsilon/n)$.
Similarly we estimate that the probability of exiting $D(\epsilon)$ at time $T_{D(\epsilon)}$
with  $\hat{X}_{i+1}(T_{D(\epsilon)})- \hat{X}_i  (T_{D(\epsilon)}) \approx 0 $
for all $i \neq k$ and  $\hat{X}_{k+1}(T_{D(\epsilon)})- \hat{X}_k  (T_{D(\epsilon)}) \approx \epsilon $ is approximately
\begin{equation}
\frac{ {\mathbf E}_0^{N,n} \bigl[ {\mathbf P}^{N,n}_{X (T_{B(\epsilon/n)})} \bigl( \hat{X}_{i}(\tau) -
\hat{X}_{i+1}(\tau) \approx 0 \text{ for $i \neq k$,  }  \hat{X}_{k}(\tau) -
\hat{X}_{k+1}(\tau) \approx \epsilon  \bigr)\bigr]} {{\mathbf E}_0^{N,n} \bigl[ {\mathbf P}^{N,n}_{X (T_{B(\epsilon/n)})} \bigl( \hat{X} \text{ exits $D(\epsilon) \setminus
    D( 1/(\epsilon n^2))$ via the outer boundary }  \bigr)\bigr]},
\end{equation}
where, as previously, $\tau$ is the exit time of $D(\epsilon) \setminus
    D( 1/(\epsilon n^2))$.
Thus, in view of \eqref{theta1} and \eqref{theta2}, and   taking the cell $E=\{ x_1= x_2= \ldots= x_k< x_{k+1}=x_{k+2}= \ldots =x_N\}$, we guess  that the parameter $\theta(k:N-k)$ 
associated with a limiting $N$-point motion should be equal to the
limit as $n$ tends to infinity and $\epsilon$ tends to zero of
\begin{equation}
\frac{\epsilon \times {\mathbf E}_0^{N,n} \bigl[ {\mathbf P}^{N,n}_{X (T_{B(\epsilon/n)})} \bigl( \hat{X}_{i}(\tau) -
\hat{X}_{i+1}(\tau) \approx 0 \text{ for $i \neq k$,  }  \hat{X}_{k}(\tau) -
\hat{X}_{k+1}(\tau) \approx \epsilon  \bigr)\bigr] }{ 2 {\mathbf E}_0^{N,n}[ T_{B(\epsilon/n)} ] }
\end{equation}
Substituting in our estimates from \eqref{timeball} and \eqref{splitdist} and using the fact  that
the exit distribution  from $B(\epsilon/n)$ is uniform we arrive at
\begin{equation}
 \frac{\gamma ab}{2}
\int_{S^{N-2}}  \bigl(\min_{1 \leq i \leq k} z_i -\max_{k+1 \leq i
  \leq N}z_i )\bigr)^+dz .
\end{equation}
in which the integral over the unit sphere $S^{N-2}\subset {\mathbf R}^N_0$ is taken with
respect to  Lebesgue measure on the sphere normalized so $\int_{S^{N-2}} dz=1$.  When we rewrite the spherical
integral as a Gaussian integral this agrees  the value given in Theorem \ref{main}.

\section{Proof of main result}

In view of the characterization of a family of sticky Brownian
motions by the ${\mathcal A}^\theta_N$-martingale problem, it is a
natural strategy to prove Theorem \ref{main} by considering smooth
approximations $f_n$ to a given function $f\in L_N$ and to derive, using weak
convergence,  from the
martingale property, under ${\mathbf P}^{N,n}$, of 
\begin{equation}
\label{gmart}
f_n(X(t))- \int_0^t {\mathcal G}^{N,n} f_n (X(s))ds
\end{equation}
that
\[
f(X(t))- \int_0^t {\mathcal A}^{\theta}_N f (X(s))ds,
\]
is a martingale under ${\mathbf P}^{N,\theta}$.
There are  difficulties to be overcome in pursuing this which
 arise because ${\mathcal A}^{\theta}_N f$  is not continuous. A key
 step is to establish the  weaker statement described in the following
 lemma, which gives information about how the limiting process leaves
 the main diagonal $D=\{ x \in {\mathbf R}^N: x_1=x_2= \ldots= x_N\}$. Let $L_N^0$ denote the subspace of  $L_N$ containing those functions which are invariant under shifts $(x_1,x_2, \ldots, x_n) \mapsto (x_1+h,x_2+h, \ldots, x_n+h)$,  and consequently identically equal  to $0$ on $D$.

\begin{lemma}
\label{essentiallemma}
Fix $x\in {\mathbf R}^N$, and suppose that ${\mathbf P}_{x}$ is a
subsequencial limit of the family of probability measures $\bigl({\mathbf P}_{x}^{N,n} ; n \geq
1\bigr)$. Then for any convex $f \in L^N_0$, 
  \[
Z^f(t)=f(X(t))- {\mathcal A}^\theta_Nf(0) \int_0^t {\mathbf 1}(X(s) \in D) ds 
\]
is a submartingale under ${\mathbf P}_x$, where the  family of
parameters $\theta$ are specified as in Theorem 1.
\end{lemma}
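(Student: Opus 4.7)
The plan is to derive the submartingale property in two stages: first obtain the soft submartingale statement for $f(X(t))$ via convexity plus smoothing, then bound its compensator from below with the correct coefficient $\mathcal{A}^\theta_N f(0)$ using an excursion/renewal argument along the lines of Section~2.

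For the first stage, mollify the convex, piecewise linear $f\in L^0_N$ by convolution with a smooth, nonnegative, compactly supported radial density $\rho_\eta$ on $\mathbf{R}^N_0$, producing a smooth convex shift-invariant $f_\eta$ with $f_\eta\to f$ uniformly on compacts as $\eta\downarrow 0$. Under $\mathbf{P}^{N,n}_x$ the process $X$ is a smooth diffusion with generator $\mathcal{G}^{N,n}$, so It\^o's formula yields that $f_\eta(X(t))-\int_0^t \mathcal{G}^{N,n}f_\eta(X(s))\,ds$ is a martingale. Because $\psi$ is positive definite, the symmetric coefficient matrix $A^n(x)=(\psi(n(x_i-x_j)))_{i,j}+(b^2/n^2)I$ is positive semidefinite, hence $\mathcal{G}^{N,n}f_\eta=\tfrac12\operatorname{tr}(A^n\nabla^2 f_\eta)\ge 0$ pointwise, so $f_\eta(X(t))$ is a submartingale under each $\mathbf{P}^{N,n}_x$. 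Passing to the subsequential weak limit and then sending $\eta\downarrow 0$ gives the corresponding submartingale statement for $f(X(t))$ under $\mathbf{P}_x$, with some nondecreasing compensator $A_t$.

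For the second stage I would carry out a renewal analysis exactly in the spirit of Section~2 to produce a quantitative lower bound on $A_t$. Fix small $\epsilon>0$, and introduce the alternating stopping times $\sigma_1<\tau_1<\sigma_2<\tau_2<\cdots$, where $\sigma_j$ is the $j$-th entry of $\hat X$ into $D(1/(\epsilon n^2))$ following a previous excursion to $B(\epsilon/n)$, and $\tau_j$ is the subsequent first exit from $B(\epsilon/n)$. From the inner region the exit distribution on the sphere of radius $\epsilon/n$ is asymptotically uniform, and optional stopping from such a point to the exit of $D(\epsilon)\setminus D(1/(\epsilon n^2))$ gives the cell-splitting law~\eqref{splitdist}. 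Applying optional stopping to $f_\eta$ across one excursion and conditioning on $\mathcal{F}_{\sigma_j}$, the increment $\mathbf{E}^{N,n}[f_\eta(X(\tau_j))-f_\eta(X(\sigma_j))\mid\mathcal{F}_{\sigma_j}]$ matches, to leading order, the sum $\sum_{I\sqcup J=\{1,\ldots,N\}}\theta(|I|{:}|J|)\nabla_{v_{I,J}} f(0)$ times $\mathbf{E}^{N,n}[\tau_j-\sigma_j\mid\mathcal{F}_{\sigma_j}]$ from \eqref{timeball} --- which is exactly $\mathcal{A}^\theta_N f(0)$ times the expected excursion duration, as in the derivation of the formula for $\theta(k{:}l)$ in Theorem~\ref{main}. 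Summing over $j$ with $\tau_j\le t$ and using that the time spent in the annulus $B(\epsilon/n)\setminus D(1/(\epsilon n^2))$ is negligible compared to the time spent in the inner region gives, after localisation at a suitable stopping time, an inequality of the form
\[
\mathbf{E}^{N,n}_x\!\bigl[f_\eta(X(t))-f_\eta(X(s))\mid\mathcal{F}_s\bigr]\;\ge\;\mathcal{A}^\theta_N f(0)\,\mathbf{E}^{N,n}_x\!\left[\int_s^t \mathbf{1}\bigl(\hat X(r)\in D(1/(\epsilon n^2))\bigr)\,dr\,\Big|\,\mathcal{F}_s\right]-\delta(n,\epsilon,\eta),
\]
with $\delta\to 0$ in the joint limit $n\to\infty$, $\epsilon\downarrow 0$, $\eta\downarrow 0$, from which the claimed submartingale property of $Z^f$ follows once the indicator on the right is identified in the limit with $\mathbf{1}(X(r)\in D)$.

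The main obstacle is making the excursion analysis of Section~2 quantitative enough to survive the iterated weak-convergence / double-limit procedure. Three points need careful bookkeeping: uniform-in-$n$ remainder bounds in the exit-time and exit-distribution asymptotics \eqref{timeball}--\eqref{splitdist}; control of the error introduced by the mollification (where the Hessian of $f_\eta$ concentrates near cell boundaries including those away from $D$, so one must argue that the contribution off the central diagonal can only increase the compensator and so is harmless for a lower bound); and, most delicately, verifying that the prelimit occupation time of the shrinking neighbourhood $D(1/(\epsilon n^2))$ converges, along the given subsequence, to the sticky occupation time $\int_0^t \mathbf{1}(X(s)\in D)\,ds$ under $\mathbf{P}_x$. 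This last point is essentially a sticky-local-time identification at the main diagonal and is where the subsequential convergence hypothesis is used decisively.
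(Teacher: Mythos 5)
Your first stage is fine as far as it goes (convexity plus positive semidefiniteness of the diffusion matrix does give that $f_\eta(X(t))$, and in the limit $f(X(t))$, is a submartingale), but that only produces \emph{some} increasing compensator; the entire content of the lemma is the quantitative lower bound with the constant ${\mathcal A}^\theta_N f(0)$, and your second stage does not prove it. What you propose there is essentially a restatement of the heuristic of Section 2, which the paper explicitly flags as non-rigorous: the exit-time asymptotic \eqref{timeball}, the uniformity of the exit law on the sphere of radius $\epsilon/n$, the coalescing approximation leading to \eqref{splitdist}, and the ``starts afresh by mixing'' renewal step are all unproved estimates, and turning them into uniform-in-$n$ bounds that survive your triple limit $n\to\infty$, $\epsilon\downarrow 0$, $\eta\downarrow 0$ is the real analytical difficulty, not bookkeeping. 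There is also a concrete internal obstruction in your excursion step: for the increment ${\mathbf E}[f_\eta(X(\tau_j))-f_\eta(X(\sigma_j))\mid {\mathcal F}_{\sigma_j}]$ to pick up the cell structure of $f$ (and hence the gradients $\nabla_{v}f(0)$ weighted by $\theta$), the excursion scale $\epsilon/n$ must be large compared with the mollification scale $\eta$, so $\eta$ must be taken $n$-dependent; but then you lose the fixed smooth test function on which your optional-stopping and weak-convergence manipulations rely, and the error term $\delta(n,\epsilon,\eta)$ you posit has no identified source of smallness.

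The paper avoids all of this by choosing the smooth approximations not by mollification but as $f_n(x)=n^{-2}g(n^2x)$, where $g$ is the solution of ${\mathcal H}^N g=c(f)$ from Proposition \ref{prop} with linear growth matching $f$ at infinity and, crucially, convex whenever $f$ is. Then the identity ${\mathcal G}^{N,n}f_n\approx {\mathcal H}^N g(n^2x)=c(f)$ holds \emph{exactly up to an explicit error} on the ball $B(K/n^2)$ (using only the quadratic behaviour \eqref{ass} of $\psi$), convexity gives ${\mathcal G}^{N,n}f_n\geq 0$ globally, and the algebraic check $c(f)={\mathcal A}^\theta_N f(0)$ replaces your excursion computation entirely. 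After that, only the soft occupation-time comparison of Lemma \ref{nearzero} is needed to replace ${\mathbf 1}(\hat X\in B(K/n^2))$ by ${\mathbf 1}(X\in D)$ in the limit. In short: the missing idea in your proposal is precisely this PDE-based construction (or some substitute for it) that converts the renewal heuristic into a pointwise generator inequality; without it, your stage two is a program, not a proof.
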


We will  prove this lemma  by applying  weak convergence to
${\mathbf P}^{N,n}$ martingales given at \eqref{gmart}. But it turns out
that we must carefully  select  suitable smooth approximations
$f_n$. In fact we will choose $f_n(x)= n^2 g(n^{-2}x)$ where the
function $g$ is determined according to the next proposition which is adapted  from \cite{w2}.

 Recall that   the
generators ${\mathcal G}^{N,n}$, rescaled and restricted to ${\mathbf R}^{N}_0$,  converge to ${\mathcal H}^{N}$ given  by\eqref{lineargen}. The constant $gamma$ was defined at \eqref{gamma}.

\begin{proposition}
\label{prop}
Let $f: S^{N-2} \rightarrow {\mathbf R}$ be a square integral function on the
unit sphere  $S^{N-2} \subset {\mathbf R}^{N-1}_0$. Let 
\[
c=c(f)=  \gamma ab  \int_{S^{N-2}} f(z) dz
\]
 where the
integral is with respect to normalized  Lebesgue measure on the sphere.   There exists a  unique solution to
\[
{\mathcal H}^N g=  c
\]
satisfying $g(0)=0$ and 
\[
\lim_{r \rightarrow \infty} g(rz)/r= f(z) \text{ uniformly for } z \in
S^{N-2}.
\]
Moreover if $ y \mapsto\|y\| f(y/\|y\|)$ is a convex function  on
${\mathbf R}^{N-1}_0$ then so too is $y \mapsto g(y)$.
\end{proposition}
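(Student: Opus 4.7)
The plan is to separate variables on the sphere $S^{N-2}$ and solve a family of one-dimensional ODEs for existence and uniqueness, and then deduce convexity from a stochastic representation that exploits the linear SDE associated with $\mathcal{H}^N$. First I would expand $f=\sum_{\ell\ge 0}f_\ell$ in the basis of spherical harmonics on $S^{N-2}$, with $f_\ell$ in the $\ell$-th eigenspace of $\Delta_{S^{N-2}}$ (eigenvalue $-\ell(\ell+N-3)$). Because $\mathcal{H}^N$ in \eqref{lineargen} preserves each angular mode, I would seek $g$ in the form
\[
g(rz)=g_0(r)+\sum_{\ell\ge 1} h_\ell(r)\,f_\ell(z),
\]
where $g_0$ solves the inhomogeneous radial equation $\mathcal{H}^N_{\text{rad}}g_0=c$ with $g_0(0)=0$, and for $\ell\ge 1$ the function $h_\ell$ solves
\[
\Bigl(\tfrac{b^2}{2}+a^2r^2\Bigr)h_\ell''+\tfrac{(N-2)b^2}{2r}h_\ell'-\tfrac{b^2\ell(\ell+N-3)}{2r^2}h_\ell=0,
\]
subject to $h_\ell(0)=0$ and $h_\ell(r)/r\to 1$.

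For $\ell=0$ the excerpt already furnishes $f_0$ with $f_0(r)/r\to 1/(\gamma ab)$, so $g_0=c\,f_0$ contributes the spherically-averaged part of the asymptotic, $g_0(r)/r\to\int_{S^{N-2}}f$. For $\ell\ge 1$ an indicial analysis at $r=0$ produces exponents $\ell$ and $-(\ell+N-3)$, and the regular root forces $h_\ell\sim r^\ell$ near the origin; at $r=\infty$ the coefficient $a^2r^2$ dominates, so solutions are asymptotic to $A+Br$, and I would pin down the unique member with $B=1$ by a Wronskian/shooting argument (as a sanity check $h_1(r)=r$ is an exact solution). Uniqueness of $g$ follows from the same analysis: any function with $\mathcal{H}^N u=0$, $u(0)=0$ and $u(rz)=o(r)$ must vanish in each spherical mode.

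The main obstacle is the convexity assertion. Here I would exploit the fact that the diffusion $Y_t$ on $\mathbf{R}^{N-1}_0$ with generator $\mathcal{H}^N$ satisfies the linear SDE
\[
dY_t=\sqrt{2}\,a\,Y_t\,dW_t+b\,dB_t,
\]
with $W$ a one-dimensional Brownian motion and $B$ an independent Brownian motion in $\mathbf{R}^{N-1}_0$. The drift vanishes and the dispersion is linear in $Y$, so the stochastic flow is affine in the initial datum: $Y^y_t=\Phi_t y+\Psi_t$ with $\Phi_t=\exp(\sqrt{2}aW_t-a^2t)$ and $\Psi_t$ independent of $y$. Consequently, for $F(y)=\|y\|f(y/\|y\|)$ convex, the pathwise inequality $F(Y^{\tau y_1+(1-\tau)y_2}_t)\le\tau F(Y^{y_1}_t)+(1-\tau)F(Y^{y_2}_t)$ gives that $y\mapsto\mathbf{E}_y[F(Y_t)]$ is convex for every $t$. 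Combined with the martingale identity $g(y)=\mathbf{E}_y[g(Y_t)]-ct$ and the asymptotic $g-F=o(\|y\|)$, this should let me express $g$ as a pointwise limit of convex functions. The hardest step will be justifying that limit: $\|Y^y_t\|$ grows exponentially ($\mathbf{E}\|Y^y_t\|^2\sim e^{2a^2t}$), while the pointwise bound $g-F=o(\|\cdot\|)$ does not immediately control $\mathbf{E}_y[g(Y_t)-F(Y_t)]-ct$, so I expect that quantitative estimates on $g-F$ extracted from the harmonic expansion, or alternatively a Dirichlet-problem approximation on exhausting balls $B(R)$ in which the affine flow propagates convexity on each finite time horizon before letting $R\to\infty$, will be the technical heart of the argument.
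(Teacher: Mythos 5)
There is nothing in the paper to compare against here: Proposition \ref{prop} is stated with the remark that it is ``adapted from \cite{w2}'' and no proof is given in this paper, so your proposal must stand on its own. Much of your structural analysis is sound and very much in the spirit of that reference: the generator \eqref{lineargen} is indeed that of the linear SDE $dY_t=\sqrt{2}\,a\,Y_t\,dW_t+b\,dB_t$ on ${\mathbf R}^{N-1}_0$, whose flow is affine with positive scalar linear part $\Phi_t=\exp(\sqrt{2}aW_t-a^2t)$, so that $y\mapsto{\mathbf E}_y[F(Y_t)]$ is convex whenever $F$ is; the spherical-harmonic reduction, the indicial roots $\ell$ and $-(\ell+N-3)$, and the exact solution $h_1(r)=r$ are all correct.

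Nevertheless the proposal has genuine gaps, and the decisive one is exactly the step you defer: passing from convexity of $P_tF-ct$ to convexity of $g$ requires $P_t(F-g)(y)\to 0$ (or at least a version after centring), while the only control available is $F-g=o(\|y\|)$. This is not enough: the radial diffusion \eqref{radialgen} is recurrent with stationary density whose tail decays only like $r^{-2}$, so ${\mathbf E}_y\|Y_t\|$ grows without bound in $t$, and the bound $\epsilon\,{\mathbf E}_y\|Y_t\|+C_\epsilon$ cannot be made small by taking $t\to\infty$ first. The fallback you sketch does not rescue this: the exit-time approximants ${\mathbf E}_y[F(Y_{T_R})]-c\,{\mathbf E}_y[T_R]$ do solve the Dirichlet problem on $B(R)$, but stopping at $T_R$ destroys the affine structure of the flow, and harmonic replacement for a variable-coefficient operator does not in general preserve convexity, so convexity of these approximants is exactly as hard as the original claim. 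Thus the convexity assertion — the part of the proposition the paper actually uses — remains unproved. Two secondary gaps: for merely square-integrable $f$ the mode-by-mode construction gives at best $L^2(S^{N-2})$ control of $g(rz)/r-f(z)$, not the stated uniform convergence (one needs decay rates in $\ell$, uniform in $r$, or extra regularity of $f$, noting that convexity of $\|y\|f(y/\|y\|)$ does give continuity in the case you need); and both the normalisation $h_\ell(r)/r\to 1$ and the uniqueness argument require proving that the regular-at-zero solution has a strictly positive linear coefficient at infinity — asserted via ``Wronskian/shooting'' but not shown. That point is fillable (for $h\ge 0$ the ratio $h_\ell'/s'$ is nondecreasing, where the scale density $s'$ tends to a positive constant at infinity, forcing at least linear growth), but some such argument must be supplied.
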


\begin{proof}[proof of Lemma \ref{essentiallemma}]
Let $f \in L^N_0$ be convex, and consider its restriction to
$S^{N-2} \subseteq {\mathbf R}^N_0$. Let $c=c(f)=\gamma ab \int_{S^{N-2}} f(z)dz$ and let $g$ be the
corresponding solution to  $ {\mathcal H}^N g = c$  described in
Proposition \ref{prop}. Extend $g$ to a function on ${\mathbf R}^N$ invariant
under shifts $(x_1,x_2, \ldots, x_n) \mapsto (x_1+h,x_2+h, \ldots, x_n+h)$, and set $g_n(x)=n^{-2} g(n^2x)$.

We  want to estimate  ${\mathcal G}^{N,n} g_n(x)$ in a
neighbourhood of the diagonal $D$.  We write
\begin{multline} \label{decomp}
{\mathcal G}^{N,n}g_n(x)= \frac{1}{2}\sum_{i, j}\psi \bigl(n(x_i-x_j)\bigr)\frac{\partial^2
}{\partial x_ix_j} g_n (x)+ \frac{b^2}{2n^2} \sum_{i}\frac{\partial^2
}{\partial x_i^2}g_n(x) \\
= \left\{\frac{1}{2}\sum_{i, j}\bigl(\psi\bigl(n(x_i-x_j)\bigr)-1+a^2n^2(x_i-x_j)^2\bigr)\frac{\partial^2
}{\partial x_ix_j} g_n (x) \right\} + \\
\left\{\frac{1}{2}\sum_{i, j}\bigl(1-a^2n^2(x_i-x_j)^2\bigr)\frac{\partial^2
}{\partial x_ix_j} g_n (x)+ \frac{b^2}{2n^2} \sum_{i}\frac{\partial^2
}{\partial x_i^2}g_n(x) \right\}
\end{multline}
The first term in braces appearing here can be controlled as
follows. Recall  $\hat{x}$ denotes the  orthogonal projection of $x$ onto ${\mathbf R}^N_0$ and that $B(r)$ is the ball of radius $r$ in ${\mathbf R}_0^N$.  Given  $K>0$ let 
\[
M(K)=\max_{i,j}   \sup_{\hat{x} \in B(K)}\left|\frac{\partial^2
}{\partial x_ix_j} g (x) \right| = n^{-2}  \max_{i,j}   \sup_{\hat{x} \in
B(K/n^2)}\left|\frac{\partial^2}{\partial x_ix_j} g_n (x) \right| < \infty.
\]
Then  given $\epsilon>0$, we may by \eqref{ass}, choose $n_0$  so that
for all $n \geq n_0$, and $x$ so that $\hat{x} \in B(K/n^2)$,
\[
\bigl| \psi_n( x_i-x_j) -1 +a^2n^2( x_i-x_j)^2 \bigr|  \leq
\frac{\epsilon }{N^2 KM(K)}n^2(x_i-x_j)^2 \leq \frac{\epsilon }{N^2 M(K)}, 
\]
and this then entails that the first term in braces is no larger than
$\epsilon$ in modulus. Because of the shift invariance of $g$, the second term in braces appearing in equation  \eqref{decomp} is  equal to $\bigl({\mathcal H}^N g\bigr) ( n^2 x)$, which in
turn is equal to $c(f)$. 

Next  we claim that
\[
c(f) ={\mathcal A}^\theta_Nf(0).
\]
To verify this it is enough, by linearity, to check it for functions of
the form 
\[
f(x)= \bigl(\min_{i \in \pi_1} x_i - \max_{i \in \pi_2} x_i \bigr)^+
\]
where $\pi=(\pi_1,\pi_2)$ is an ordered partition of $\{1, \ldots,N\}$ into two non-empty parts.
For such $f$ the gradients  $\nabla_{v} f(0)$ appearing in the
definition of ${\mathcal A}^\theta_N f(0)$ are all zero except for $\nabla
v_{\pi_1, \pi_2} f(0)$ which equals $2$. Thus, recalling the values  assigned to the parameters $(\theta(k:l)$ in Theorem 1,
\[
{\mathcal A}^\theta_N f(0)=2 \theta(|\pi_1|, |\pi_2|) \\
= \frac{ab}{\gamma_N}
\int_{S^{N-2}}  \bigl(\min_{i \in \pi_1} z_i -\max_{i \in \pi_2}z_i
)\bigr)^+dz
=c(f).
\]

Observe that because $g_n$ is smooth and convex,  ${\mathcal G}^{N,n} g_n$ is continuous and  non-negative everywhere.
This fact, together with the above paragraphs allows us to conclude
that given   $K>0$ and  $\epsilon>0$,   for all
sufficiently large $n$ we have 
\begin{equation}
\label{subby}
g_n(X(t))- \bigl({\mathcal A}^\theta_Nf(0)- \epsilon \bigr) \int_0^t {\mathbf 1}(\hat{X}(s) \in B(K/n^2)) ds
\end{equation}
is a submartingale under ${\mathbf P}^{N,n}$.  

Fix times $ s<t$ and let $\Phi$ be a bounded, non-negative and continuous function on the path space ${\mathbf C}\bigl( [0,s],{\mathbf R}^N\bigr)$ .  
Note that the boundary behaviour of  $g$ implies that $| g_n(x) -f(x)|/ (1+||x||) \rightarrow 0 $ as $n \rightarrow \infty$ uniformly for $x \in {\mathbf R}^N$ , and that  since 
${\mathbf E}^{N,n} \bigl[ \|X(s)\|\bigr] $ and ${\mathbf E}^{N,n} \bigl[ \|X(t)\|\bigr] $ are bounded uniformly in $n$,
the weak convergence of 
( a subsequence of ) ${\mathbf P}^{N,n}$ to ${\mathbf P}$, implies that ( along the subsequence)
\begin{multline*}
{\mathbf E}^{N,n}\Bigl[ \Phi( X(r), r \leq s)\bigl( g_n( X(t))- g_n(X(s)) \bigr) \Bigr]   \rightarrow \\
{\mathbf E} \Bigl[ \Phi( X(r), r \leq s)\bigl( f( X(t))- f(X(s)) \bigr) \Bigr].
\end{multline*}
Let $\phi_K:{\mathbf R}^N_0 \rightarrow [0,1]$ be a  continuous  function satisfying $\phi_K(x)=0$ for $\|x\|\geq 1/K$ and $\phi_K(x)=1$ for $\|x\|\leq 1/(2K)$
Then we also have by weak convergence ( along the subsequence) that
 \begin{multline*}
{\mathbf E}^{N,n}\left[ \Phi( X(r), r \leq s)\int_s^t \phi_K\bigl(\hat{X}(u)\bigr) du    \right]   \rightarrow {\mathbf E}\left[ \Phi( X(r), r \leq s)\int_s^t \phi_K\bigl(\hat{X}(u)\bigr) du    \right]   \\ \geq 
{\mathbf E}\left[ \Phi( X(r), r \leq s)\int_s^t {\mathbf 1}(X(u) \in D) du    \right] .
\end{multline*}
For a given $\epsilon>0$, if we choose $K$ large enough, then  by virtue of Lemma  \ref{nearzero},   for all sufficiently large $n$,
\begin{multline*}
{\mathbf E}^{N,n}\left[ \Phi( X(r), r \leq s)\int_s^t {\mathbf 1}(\hat{X}(u) \in B(K/n^2)) du    \right]+\epsilon \geq \\
 {\mathbf E}^{N,n}\left[ \Phi( X(r), r \leq s)\int_s^t \phi_K\bigl(\hat{X}(u)\bigr) du    \right] .
\end{multline*}

From these    statements  and the fact that  the process at \eqref{subby} is a submartingale for large enough $n$, it follows that
\begin{multline*}
{\mathbf E} \Bigl[ \Phi( X(r), r \leq s)\bigl( f( X(t))- f(X(s)) \bigr) \Bigr] \geq 
\\
  \bigl({\mathcal A}^\theta_Nf(0)- \epsilon \bigr)\left({\mathbf E}\left[ \Phi( X(r), r \leq s)\int_s^t {\mathbf 1}(X(u) \in D) du    \right] -\epsilon \right)
\end{multline*}
Consequently,  $s \leq t$, $ \Phi\geq 0$ and $\epsilon>0$ being arbitrary,  $Z^f$ is a submartingale under ${\mathbf P}$ as desired.
\end{proof}

We may now give the
\begin{proof}[Proof of Theorem 1]
Fix $x_0\in {\mathbf R}^N$.  Because the marginal  laws of each component $\bigl( X_i(t); t \geq 0)$ converge as $n \rightarrow \infty$ it follows that  the family of probability measures $\bigl({\mathbf P}_{x_0}^{N,n} ; n \geq
1\bigr)$ is tight. Thus it suffices to show that any limit point
${\mathbf P}_{x_0}$ solves the ${\mathcal A}_N^\theta$-martingale problem
starting from $x_0$. 

We know that each pair of components $(X_i,X_j)$ converges in law to a pair of
 Brownian motions whose difference is a sticky Brownian motion and consequently 
\[
2\langle X_i, X_j \rangle (t)= \langle X_i, X_i\rangle (t) +\langle X_j, X_j \rangle (t) - \langle X_i-X_j \rangle (t)  =2\int_0^t {\mathbf 1}(X_i(s) \neq X_j(s))ds
\]
under ${\mathbf P}_{x_0}$. Thus it suffices to  show that 
\begin{equation}\label{mart}
f(X(t))- \int_0^t {\mathcal A}^\theta_N f(X(s)) ds 
\end{equation}
is a ${\mathbf P}_{x_0}$-martingale for each $f \in L^N$. By
the addition of a suitable linear function we may assume that $f \in
L^N_0$. In fact  we claim  that
it is enough  that for every convex  $f\in L^N_0$  the expression
at \eqref{mart} defines a submartingale. We verify this claim as follows.
For a general $f$
we may consider $g(x)= c\sum_{i<j} |x_i-x_j| + f(x)$  which for sufficiently
large $c$ is convex.  We would then have  that the corresponding
process  $g(X(t))- \int_0^t {\mathcal A}^\theta_N g(X(s)) ds$  is a
submartingale.   But we also know  that the difference of  each pair of components of $X$ is a sticky Brownian motion  with parameter $\theta=2\theta(1:1)$, and  thus,  
\[
|X_i(t)- X_j(t)|- 4\theta(1:1)\int_0^t {\mathbf 1}(X_i(s) = X_j(s))ds
\]
is a martingale.  Now we also observe that 
\[
{\mathcal  A}_N^\theta g (x)= 4c\theta(1:1) \sum_{i<j} {\mathbf 1}(x_i=x_j) +{\mathcal
  A}_N^\theta f(x).
\]
And so  we  deduce that  \eqref{mart} must be a
submartingale. But we can  consider $g(x)= c\sum |x_i-x_j| -
f(x)$ in the same manner, and hence  deduce that \eqref{mart} is a supermartingale.

We now proceed with the proof of the theorem. The result holds for
dimension $N=2$, and we argue by induction on $N$. So assume  the result
holds for dimension $N-1$, and consider a convex $f \in L^N_0$. 
By the Meyer decomposition theorem, associated with the ${\mathbf P}_{x_0}$
submartingale $f(X(t))$ is some continuous  increasing process $A(t)$.   
Let $U_{\pi} = \{ x \in {\mathbf R}^N: x_i > x_j \text {for all  }i \in
  \pi_1, j \in \pi_2 \}$ for some ordered partition $\pi=(\pi_1,\pi_2)$
  of   $\{1,2,\ldots,N\}$ into two parts.
According to Lemma \ref{local}, on $U_\pi$, $f(x)$ can be written as a
sum  of $f_1(x_j ;j \in \pi_1)$ and $f_2(x_j; j \in \pi_2)$. Applying the
inductive hypothesis  the processes
\[
f_i(X_j(t); j \in \pi_i ) - \int_0^t {\mathcal A}_{\pi_i}^\theta f_i
(X_j(s); j \in \pi_i) ds
\]
 for $i=1,2$ are both
martingales. Consequently,  the
compensator $A$  of $f(X(t))$ must satisfy 
\[
 dA(t)=  \bigr({\mathcal A}_{\pi_1}^\theta f_1
(X_j(t); j \in \pi_1) + {\mathcal A}_{\pi_2}^\theta f_i
(X_j(t); j \in \pi_2)  \bigl) dt
\]
on the set $\bigl\{ t: X(t) \in U_{\pi}\}$.
Noting  that \[
 \bigr({\mathcal A}_{\pi_1}^\theta f_1
(x_j; j \in \pi_1) + {\mathcal A}_{\pi_2}^\theta f_i
(x_j; j \in \pi_2)  \bigl)= {\mathcal A}_N^\theta f (x) \text { for }
x \in U_\pi, 
\] and  letting $\pi$ vary we conclude that in fact
\[
 dA(t)= {\mathcal A}_N^\theta f (X(t)) dt \text{ on } \{ t: \hat{X}(t)
 \neq 0\}.
\]
Finally applying Lemma \ref{essentiallemma} we deduce that $dA$ must
dominate ${\mathcal A}_N^\theta f (X(t)) dt$ on $ \{ t: \hat{X}(t)
 \neq 0\} $ and that \eqref{mart} must be a submartingale. By our  previous
discussion since this holds for every convex f$\in L^N_0$ in fact
\eqref{mart} is a martingale and  the inductive step is complete.

\end{proof}

\section{Some lemmas}
\begin{lemma}
\label{nearzero}
Given     $t$ and $\epsilon>0$ there exist $c$,$c^\prime$ and $n_0$    such that
\[
{\mathbf E}_x^{N,n} \left[ \int_0^t {\mathbf 1} \bigl( |X_i(s)-X_j(s)| \in (
  c/ n^2, c^\prime)\bigr) ds \right]  \leq \epsilon
\]
for all $n\geq n_0$ and $x\in {\mathbf R}^N$. 
\end{lemma}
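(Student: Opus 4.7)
The plan is to reduce to the two-point motion, apply the semimartingale occupation-time formula, and bound the local time uniformly using Tanaka's formula. Fix $i \neq j$ and set $Z(s) = X_i(s) - X_j(s)$. Under $\mathbf{P}_x^{N,n}$ the pair $(X_i, X_j)$ has the law of the two-point motion with generator $\mathcal{G}^{2,n}$, so $Z$ is a continuous martingale with $d\langle Z\rangle(s) = \sigma_n^2(Z(s))\,ds$, where $\sigma_n^2(z) := 2(1 + b^2/n^2 - \psi(nz))$ is bounded by a constant $C$ independent of $n \geq 1$. In particular $Z$ is square-integrable on $[0,t]$ and $\langle Z\rangle(t) \leq Ct$ uniformly in $n$ and in the starting point.

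By the occupation-time formula,
\[
\int_0^t \mathbf{1}\bigl(|Z(s)| \in (c/n^2, c')\bigr)\, ds = \int_{c/n^2 \leq |z| \leq c'} \frac{L_t^z(Z)}{\sigma_n^2(z)}\, dz,
\]
where $L_t^z$ is the semimartingale local time. Applying Tanaka's formula to $|Z(t) - z|$ and using that $\int_0^t \mathrm{sgn}(Z(s) - z)\, dZ(s)$ is a true martingale (its bracket is bounded by $Ct$), I obtain
\[
\mathbf{E}_x^{N,n}[L_t^z(Z)] = \mathbf{E}_x^{N,n}[|Z(t)-z|] - |Z(0)-z| \leq \mathbf{E}_x^{N,n}[|Z(t)-Z(0)|] \leq \sqrt{Ct},
\]
uniformly in $x$, $z$, and $n$. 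Hence it suffices to show that $I_n := \int_{c/n^2 \leq |z| \leq c'} \sigma_n^{-2}(z)\, dz$ can be made arbitrarily small by first choosing $c'$ small and then $c$ large.

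To bound $I_n$ I would split the region at $|z| = \delta/n$ for a small fixed $\delta > 0$ chosen so that $1 - \psi(y) \geq \tfrac{1}{2} a^2 y^2$ for $|y| \leq \delta$ (possible by \eqref{ass}). On the outer range $\{\delta/n \leq |z| \leq c'\}$, the continuity of $\psi$, together with $|\psi| < 1$ off the origin and $\psi \to 0$ at infinity, gives $\sup_{|y| \geq \delta} \psi(y) < 1$, so $\sigma_n^{-2}$ is bounded by some $C_\delta$ and the outer contribution is at most $C_\delta c'$. On the inner range $\{c/n^2 \leq |z| \leq \delta/n\}$, the quadratic lower bound and the substitution $u = n^2 z$ give
\[
\int_{c/n^2}^{\delta/n} \frac{dz}{2\bigl(b^2/n^2 + \tfrac{1}{2} a^2 n^2 z^2\bigr)} = \int_c^{\delta n} \frac{du}{2b^2 + a^2 u^2} \leq \int_c^{\infty} \frac{du}{2b^2 + a^2 u^2},
\]
which vanishes as $c \to \infty$.

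Combining, $\mathbf{E}_x^{N,n}\bigl[\int_0^t \mathbf{1}(|Z(s)| \in (c/n^2, c'))\, ds\bigr] \leq \sqrt{Ct}\,(C_\delta c' + o_c(1))$, and the conclusion follows by selecting $c'$ small and then $c$ large. The main obstacle I anticipate is arranging a uniform-in-$x$ bound on the local time; once Tanaka supplies this, the rest is a direct speed-measure estimate that exploits the quadratic behaviour of $1-\psi$ near the origin in exactly the same spirit as the computation \eqref{theta} for the two-point motion.
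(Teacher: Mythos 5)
Your proof is correct, and it reaches the paper's conclusion by a somewhat different probabilistic route. The paper exploits the one-dimensional Markov structure of $Z=X_i-X_j$: it writes $Z$ as a time-changed Brownian motion via the scale/speed representation, observes that the speed density is bounded below by $1/2$ so that the time change satisfies $\tau^n_t\le 4t$, and then bounds the occupation integral by a Brownian occupation-time estimate, reducing everything to smallness of $\int f_n(z)\,dz$ with $f_n(z)={\mathbf 1}_{(c/n^2,c')}(|z|)\bigl(1+b^2n^{-2}-\psi(nz)\bigr)^{-1}$. You instead use only the semimartingale structure of $Z$ under ${\mathbf P}^{N,n}_x$ (the bracket $d\langle Z\rangle = 2(1+b^2n^{-2}-\psi(nZ))\,ds$, which follows directly from ${\mathcal G}^{N,n}$), apply the occupation-times formula for semimartingale local time, and control ${\mathbf E}[L^z_t(Z)]$ uniformly in $x$, $z$, $n$ via Tanaka's formula and the uniform bracket bound $\langle Z\rangle_t\le Ct$. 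The analytic core is then identical in the two arguments: the annulus integral of the reciprocal of $1+b^2n^{-2}-\psi(nz)$, split at $|z|=\delta/n$, with the quadratic behaviour \eqref{ass} giving the $\int_c^\infty du/(2b^2+a^2u^2)$ tail on the inner part and $\sup_{|y|\ge\delta}\psi(y)<1$ giving the $O(c')$ bound on the outer part, exactly as in the paper. What your version buys is that it never invokes the scale/speed (time-change) representation or the Markov property of $Z$ — only the covariation structure, which makes the argument a little more robust and yields the explicit quantitative bound $\sqrt{Ct}\,I_n$ uniformly in $n\ge 1$ (so $n_0=1$ suffices); the paper's version is shorter once the speed-measure representation is taken for granted, since the bound $\tau^n_t\le 4t$ immediately converts the problem into a statement about Brownian local time. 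Minor cosmetic points only: your $\sigma_n^2$ differs from the paper's speed density by the usual factor-of-two convention, and the consistency claim that $(X_i,X_j)$ is the two-point motion, while true, is not even needed — the bracket identity alone suffices.
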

\begin{proof}
Under ${\mathbf P}_x^{N,n}$, the process $Z=X_i-X_j$ is a diffusion in natural scale with
speed measure $m_n$ given by \eqref{speed}. It can thus  be  represented  as a time changed Brownian motion:
\[
Z(t)=B(\tau^n_t),
\]
where $\tau^n$ is the inverse of the increasing functional
\[
\frac{1}{2}\int_0^u \frac{ds}{1+b^2n^{-2}-\psi(nB(s))}
\]
and $B$ a standard Brownian motion starting from $x_i-x_j$.
Consequently 
\[
\int_0^{\tau_t^n} {\mathbf 1}_{( c/n^2, c^\prime)} \bigl(| B(s)| )\bigr) \frac{ds}{1+b^2n^{-2}-\psi(nB(s))}
\]
is a random variable  with the same distribution as 
2\[\int_0^t {\mathbf 1} \bigl( |X_i(s)-X_j(s)| \in ( c/n^2), c^\prime)\bigr) ds\]
has  under ${\mathbf P}_x^{N,n}$.
Note that  for all  sufficiently large $n$,
\[ \frac{1}{2} \leq \frac{1}{1+b^2n^{-2}-\psi(nz))}  \text { for all } z \in
  {\mathbf R}
\]
whence $\tau_t^n \leq 4 t$ and 
\[
\int_0^{\tau_t^n} {\mathbf 1}_{ ( c/n^2, c^\prime)}\bigl(| B(s)|  \bigr) \frac{ds}{1+b^2n^{-2}-\psi(nB(s))}
\leq  \int_0^{4t}  f_n(B(s))ds
\] 
where $f_n(z)= {\mathbf 1}_{( c/n^2, c^\prime)}(|z|) \bigl(1+b^2n^{-2}-\psi(nz)\bigr)^{-1}$.
Now rewriting this integral using the occupation time formula, and taking expectations we see that it is enough to verify that
\[
\int_{\mathbf R} f_n(z) dz 
\]
can be made arbitrarily small for all sufficiently large $n$    $c$ sufficiently large 
and $c^\prime$ sufficiently small.  This is easily checked using the assumptions on $\psi$ and in   particular 
using that there is a $\delta>0$ and a constant $M<\infty$   so that for all
sufficiently large $n$,
\[
  \bigl(1+b^2n^{-2}-\psi(nz)\bigr)^{-1} \leq \frac{2n^2}{2b^2+a^2n^4z^2},  \text{ for } z \in (-\delta/n,\delta/n)
\]
whilst  
\[
\bigl(1+b^2n^{-2}-\psi(nz)\bigr)^{-1} \leq M \text{ for } z \in {\mathbf R}\setminus (-\delta/n,\delta/n).
\]

\end{proof}

\begin{lemma}
\label{local}
Let $\pi=(\pi_1, \pi_2)$ be an ordered partition of $\{1,2, \ldots , N\}$
  into two non-empty parts, and define
\[
U_{\pi} = \{ x \in {\mathbf R}^N: x_i > x_j \text {for all  }i \in
  \pi_1, j \in \pi_2 \}.
\]
Then  $f \in L^N$ can be expressed as
\[
f(x)= f_1( x_i ; i \in \pi_1) + f_2(x_j; j \in \pi_2) \text{ for all }
x \in U_{\pi}
\]
for  some $f_1 \in L^{|\pi_1|}$, $f_2 \in L^{|\pi_2|}$.
\end{lemma}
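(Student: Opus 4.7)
The plan is to exploit the cell structure of $U_\pi$ together with the continuity of $f$ across faces to show that the linear coefficients on open cells decouple cleanly into $\pi_1$-dependent and $\pi_2$-dependent pieces. First I would note that inside $U_\pi$ the strict inequalities $x_i > x_j$ for $i \in \pi_1$, $j \in \pi_2$ are automatic, so the open cells of ${\mathbf R}^N$ contained in $U_\pi$ correspond bijectively to pairs $(\sigma,\tau)$ where $\sigma$ is a strict total ordering of $\pi_1$ and $\tau$ is a strict total ordering of $\pi_2$. On the open cell $E_{\sigma,\tau}$ the function $f$ agrees with a unique affine form
\[
f(x) = \sum_{i \in \pi_1} a_i(\sigma,\tau)\, x_i + \sum_{j \in \pi_2} a_j(\sigma,\tau)\, x_j + c(\sigma,\tau).
\]

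The key step is to exploit continuity of $f$ across the face shared by two open cells of $U_\pi$ that differ by a single adjacent transposition. If $\sigma$ and $\sigma'$ differ only by swapping two adjacent elements $i, i' \in \pi_1$ (with $\tau$ fixed), the shared face lies in $\{x_i = x_{i'}\}$; comparing the two affine forms on this face forces $a_k(\sigma,\tau) = a_k(\sigma',\tau)$ for every $k \neq i, i'$ and $c(\sigma,\tau) = c(\sigma',\tau)$, so in particular the $\pi_2$-coefficients and the constant term are unchanged. Any two strict orders on $\pi_1$ are connected by a chain of adjacent transpositions, and such a chain can be realised inside $U_\pi$ (the $\pi_1$-coordinates can be shifted to be uniformly larger than the $\pi_2$-coordinates throughout the chain), so iterating yields that $a_j(\sigma,\tau)$ for $j \in \pi_2$ and $c(\sigma,\tau)$ depend only on $\tau$. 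By the symmetric argument they depend only on $\sigma$; hence $c$ is a single constant independent of the cell, $a_i(\sigma,\tau) = a_i(\sigma)$ for $i \in \pi_1$, and $a_j(\sigma,\tau) = a_j(\tau)$ for $j \in \pi_2$.

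Having achieved this splitting, I would define $f_1$ on ${\mathbf R}^{|\pi_1|}$ by setting $f_1(y) = \sum_{i \in \pi_1} a_i(\sigma)\, y_i + c$ on the open cell of ${\mathbf R}^{|\pi_1|}$ corresponding to $\sigma$, and analogously $f_2(z) = \sum_{j \in \pi_2} a_j(\tau)\, z_j$ on ${\mathbf R}^{|\pi_2|}$. The continuity required to place $f_1$ and $f_2$ in the respective $L$-spaces follows from the continuity of $f$ across the corresponding faces in $U_\pi$: any adjacency between open cells in ${\mathbf R}^{|\pi_1|}$ lifts to an adjacency in $U_\pi$ by fixing a suitable $\pi_2$-configuration, and the matching relations for $f$ at that face become precisely the matching relations needed for $f_1$. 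The identity $f(x) = f_1(x|_{\pi_1}) + f_2(x|_{\pi_2})$ then holds on the open cells of $U_\pi$ by construction and extends to all of $U_\pi$ by continuity. The main obstacle is the bookkeeping in the second paragraph — carefully verifying that the continuity relations at the faces separating adjacent open cells of $U_\pi$ really do force the decoupling of all three pieces of data $(a_i)$, $(a_j)$ and $c$ throughout $U_\pi$; once this decoupling is in hand the rest is routine.
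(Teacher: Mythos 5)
Your argument is correct, but it follows a genuinely different route from the paper's. You work with the chamber structure of the braid arrangement inside $U_\pi$: write $f$ as an affine form on each full-dimensional cell $E_{\sigma,\tau}$, use continuity across the wall $\{x_i=x_{i'}\}$ between cells differing by an adjacent transposition (which forces the two forms to differ by a multiple of $x_i-x_{i'}$), and then chain transpositions to decouple the $\pi_1$-coefficients, the $\pi_2$-coefficients and the constant; the functions $f_1,f_2$ are then assembled cell by cell. The paper instead first subtracts a linear function to reduce to $f\in L^N_0$ (shift-invariant, hence vanishing on the diagonal), shifts $x$ so that $x_i>0>x_j$ for $i\in\pi_1$, $j\in\pi_2$, and splits $x=y+z$ where $y$ (resp.\ $z$) keeps the $\pi_1$- (resp.\ $\pi_2$-) coordinates and sets the others to $0$; both $y$ and $z$ lie in the closure of the cell containing $x$, so linearity of $f$ on that single closed cell gives $f(x)=f(y)+f(z)$ at once, and $f_1,f_2$ are read off from this. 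The paper's argument is shorter and avoids your remaining bookkeeping obligation, namely verifying that pairwise agreement across codimension-one walls yields a well-defined continuous $f_1\in L^{|\pi_1|}$ at lower-dimensional cells (this needs the standard fact that the chambers whose closures contain a given face are connected by walls through that face); your approach, on the other hand, makes completely explicit how the linear data of $f_1$ and $f_2$ on every cell is extracted from $f$, a point the paper passes over with ``extending each linearly within cells''. Do make sure the wall-crossing step is stated carefully: the shared face is only a relatively open subset of the hyperplane $\{x_i=x_{i'}\}$, but since the difference of two affine forms vanishing on such a set vanishes on the whole hyperplane, your conclusion stands.
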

\begin{proof}
By subtracting a linear function we can assume $f\in L^N_0$.  Now suppose that a given $x\in U_\pi$ satisfies $x_i >0> x_j$ for all  $i \in \pi_1, j \in \pi_2$.  Let $ y\in {\mathbf R}^N$ have  components $y_i=x_i$ for $i \in \pi_1$ and $ y_i=0$ otherwise. Likewise let $ z\in {\mathbf R}^N$ have  components $z_i=x_i$ for $i \in \pi_2$ and $ z_i=0$. Then both $y$ and $z$ lie in the closure of the cell that contains $x$, and by the linearity of $f$ restricted to the closure of that cell,
\[
f(x)=f(y)+f(z).
\]
Consequently we define $f_1( x_i ; i \in \pi_1)= f(y)$ and $f_2(x_j; j \in \pi_2)=f_2(z)$, extending each linearly within cells so as to functions  $f_1 \in L^{|\pi_1|}$ and  $f_2 \in L^{|\pi_2|}$.

\end{proof}

\begin{lemma}
\label{coalesce}
Suppose that $B_1(t) \geq B_2(t)\geq \cdots \geq B_N(t)$ are a system of coalescing Brownian motions on ${\mathbf R}$. Let $T_R= \inf\{ t\geq 0: B_1(t)-B_N(t)=R\}$, and let $r$ denote  $ B_1(0)-B_N(0)$. Then there exists a constant $C$ such that for all  $r$ and $R$ with $0 \leq r \leq R/2$,
\begin{equation*}
{\mathbf P}\bigl( T_R <\infty \text{ and there exists some $i$ with } B_1(T_R)>B_i(T_R)> B_N(T_R) \bigr) \leq C (r/R)^3.
\end{equation*}
\end{lemma}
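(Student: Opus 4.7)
The plan is to reduce to the case $N = 3$ by a union bound (using the consistency of the coalescing Brownian motion system), and then to estimate the three-particle probability by a change of variables that turns the problem into one about planar Brownian motion in a sector of opening angle $\pi/3$.

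First, the event under consideration is the union over $i \in \{2, \ldots, N-1\}$ of the events $\{B_1(T_R) > B_i(T_R) > B_N(T_R)\}$, so by the union bound it is enough to prove the bound $C(r/R)^3$ for each such event. By the consistency of coalescing Brownian motions (the joint law of any fixed collection of paths started from specified initial points depends only on those points, and not on any additional particles in the system), the joint law of $(B_1, B_i, B_N)$ agrees with that of a three-particle coalescing system started from $(B_1(0), B_i(0), B_N(0))$, and $T_R$ depends only on $B_1$ and $B_N$. It therefore suffices to treat the three-particle case with initial spread $r$.

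For $N = 3$, introduce the gaps $U = B_1 - B_2$ and $V = B_2 - B_3$; before any coalescence $(U, V)$ is a planar Brownian motion with $\mathrm{Var}(U) = \mathrm{Var}(V) = 2$ and $\mathrm{Cov}(U, V) = -1$. The change of variables $W = U + V$, $Z' = (U - V)/\sqrt{3}$ diagonalises this covariance, making $(W, Z')$ a pair of independent Brownian motions each of variance rate $2$. In these coordinates the open positive quadrant becomes the open sector $S = \{W > \sqrt{3}|Z'|\}$, an infinite wedge with apex at the origin and opening angle $\pi/3$. The initial point satisfies $W(0) = r$ and $|Z'(0)| \leq r/\sqrt{3}$, so it lies at distance at most $2r/\sqrt{3}$ from the apex. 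The event that the three particles remain distinct at time $T_R$ is exactly the event that $(W, Z')$ reaches the line $\{W = R\}$ before exiting $S$ through its sides, and since every point on this line has Euclidean norm at least $R$, this event is contained in the event of reaching distance $R$ from the apex before exiting through the sides.

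The conformal map $z \mapsto z^3$ sends $S$ to the right half plane, sends its sides to the imaginary axis, sends the circle $\{|z| = R\}$ to $\{|w| = R^3\}$, and sends the starting point to some $w_0$ with $|w_0| \leq (2r/\sqrt{3})^3$. By conformal invariance of planar Brownian motion (up to a time change), the problem becomes: bound the probability that a Brownian motion in the right half plane, started from $w_0$, reaches the semicircle $\{|w| = R^3\}$ before the imaginary axis. Mapping the right half-disk of radius $R^3$ to the first quadrant by $w \mapsto (R^3 + iw)/(R^3 - iw)$, this probability equals $(2/\pi)\arg\bigl((R^3 + iw_0)/(R^3 - iw_0)\bigr)$, which is bounded by a universal constant times $|w_0|/R^3$ whenever $|w_0| \leq R^3/2$. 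Since $r \leq R/2$ yields $(2r/\sqrt{3})^3 \leq R^3/2$, the three-particle probability is at most $C'(r/R)^3$, and summing over $i$ completes the proof.

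The main technical ingredient is the sector hitting probability, which produces the exponent $3 = \pi/(\pi/3)$ required by the lemma; any softer bound would not suffice here. The consistency step is a standard strong Markov argument: even though intermediate coalescences can alter the trajectory of $B_1$ or $B_i$, after each coalescence the combined particle is itself a Brownian motion independent of the past, so the joint distribution of $(B_1, B_i, B_N)$ coincides with that in the three-particle system started from the same three initial points.
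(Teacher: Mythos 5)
Your proposal is correct and follows essentially the same route as the paper: a union bound over $i$, reduction of $(B_1,B_i,B_N)$ to a planar Brownian motion in a wedge of opening angle $\pi/3$, comparison with a wedge having a circular outer boundary, and the harmonic-measure estimate giving the exponent $3$. The only difference is presentational — you work in gap coordinates and carry out the Dirichlet problem explicitly via the maps $z\mapsto z^3$ and a M\"obius transformation, where the paper projects onto the sum-zero plane and cites the same computation more briefly.
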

\begin{proof}
For $i=2,3, \ldots, N-1$, let $A_i$ be the event 
\[
T_R <\infty \text{ and  } B_1(T_R)>B_i(T_R)> B_N(T_R)
\]
Since the event in question is the union of these events, it is enough to prove the desired estimate holds for each $A_i$. Projecting the three dimensional process $ \bigl(B_1(t),B_i(t), B_N(t)\bigr)$ onto the plane $\{ x \in {\mathbf R}^3: x_1+x_2+x_3=0\}$ we see $A_i$ can be identified with the event that a two dimensional Brownian motion  started at a point satisfying $y_1=r$ exits the domain 
\begin{equation*}
\{ y \in {\mathbf R}^2: 0\leq y_1 \leq R,   | y_2| \leq y_1/\sqrt{3}  \}
\end{equation*}
via the boundary  $y_1= R$. By comparing with a wedge with a circular outer boundary and interior angle $\pi/3$ and solving the appropriate Dirichlet problem this exit probability is  easily seen to by bounded by  $C(r/R)^3$.
\end{proof}

\section{Stochastic flows of kernels}

Returning to the motivation coming from Gaw\c{e}dzki and Horvai it is natural to interpret the results from this paper in terms of the stochastic flows. As remarked in the introduction the consistent family of $N$ point motions with generators ${\mathcal G}^{N,n}$ do not correspond to any stochastic flow of maps. However according to  the  theory developed by Le Jan and Raimomd \cite{lejan2} they are associated with the  more general notion of a flow of kernels.

Let $W= \bigl( W(t,x), t \geq 0, x \in {\mathbf R} \bigr)$ denote the centred Gaussian process   with  covariance function $\psi(n(x_1-x_2)) \min( t_1, t_2)$. Suppose $B_1, B_2, \ldots, B_N$ are real valued Brownian motions, independent of each other and $W$. Then a diffusion with generator ${\mathcal G}^{N,n}$ can be obtained, at least in a formal sense, by solving the stochastic differential equations
\begin{equation}
X_i(t)= x_i+ \int_0^t dW(s, X_i(s))+ \frac{\sigma}{n} B_i(t).
\end{equation}
The stochastic flow of kernels $\bigl(K_{s,t}, s \leq t\bigr)$ associated with family   ${\mathcal G}^{N,n}$ describes a cloud of infinitesimal particles moving in this manner. It can be obtained by filtering on $W$, 
\begin{equation}
K_{0,t}(x_1,A)= {\mathbf P} \bigl( X_1(t) \in A | W \bigr).
\end{equation}
These kernels  have smooth densities which satisfy a  stochastic partial differential equation of advection-diffusion type. If $v(t,y)$ denotes the density of  $\int v(0,x) K_{0,t}(x,\cdot) dx$ at $y$, then
\begin{multline}
v(t,y)-v(0,y)=\int_0^t  \frac{\partial v }{\partial y} (s,y)  dW(s,y) + \int_0^t  v(s,y)  dW_y(s,y) + \\
\frac{1}{2}( b^2+1)\int_0^t  \frac{\partial^2 v }{\partial y^2} (s,y)  ds,
\end{multline}
where $W_y(t,y)= \partial W(t,y)/\partial y $. Simulations showing  a realization of  the density of $K_{0,1}(0,\cdot)$ for two different sets of  parameter values  are shown in Figure \ref{fig}. 

\begin{figure}
\centering
\begin{minipage}{.45\textwidth}
  \centering
    \includegraphics[width=\textwidth]{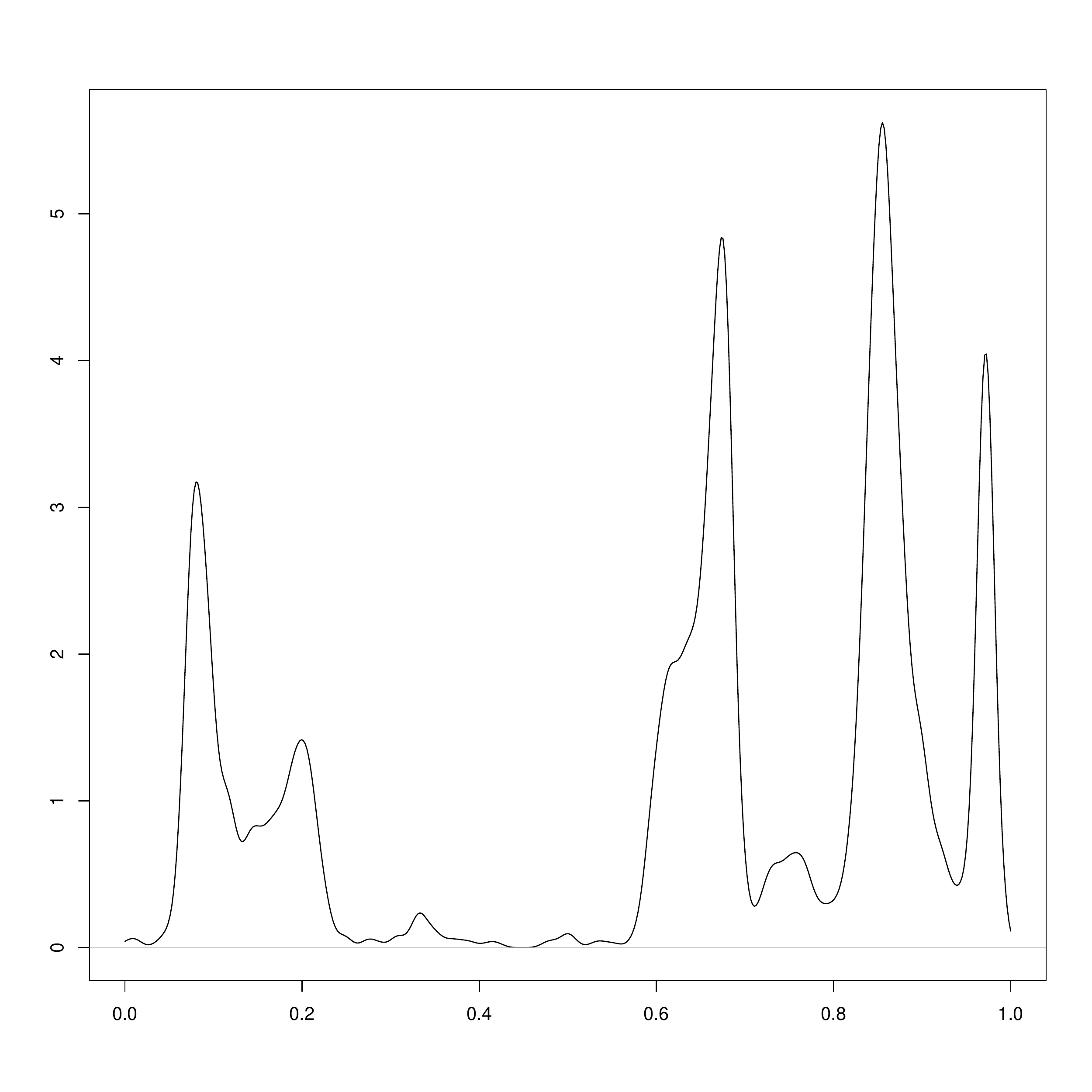}
\end{minipage}
\begin{minipage}{.45\textwidth}
  \centering
    \includegraphics[width=\textwidth]{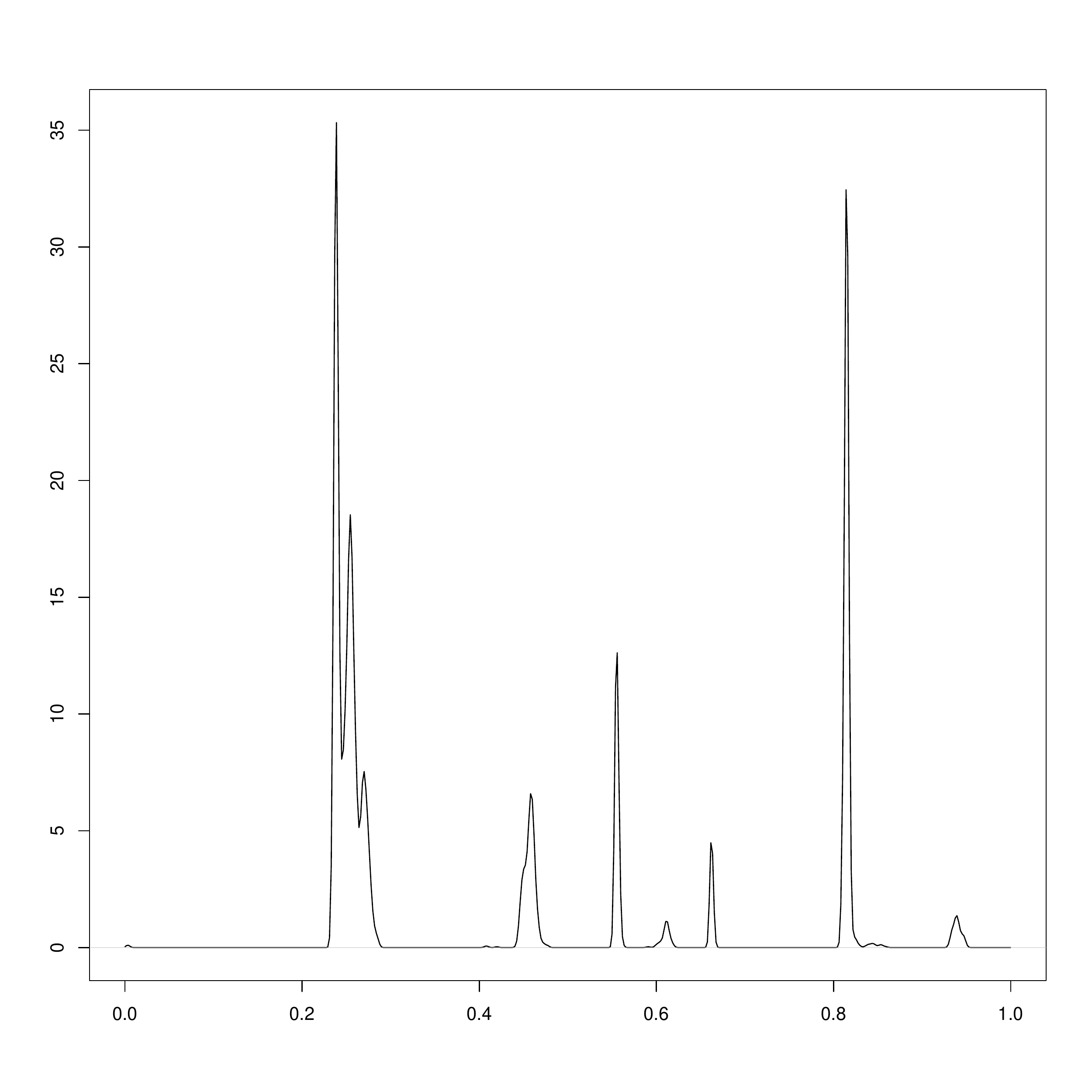}
\end{minipage}
 \caption{Simulated realizations of the density of  the kernel $K_{0,1}(0, \cdot)$ associated  with generators ${\mathcal G}^{N,1}$.   The parameters are $a=20, b=0.375$ in (a), and $a=60, b=0.125$ in (b). }
    \label{fig}
 \end{figure}

 As $n$ tends to infinity these flows of kernels converge to the  flow of kernels associated to a consistent family of sticky Brownian motions. Flows of this type were first considered by Le Jan and Raimond \cite{lejan4}. For a general splitting rule, they were defined by Howitt and Warren \cite{hw1}, and have subsequently been studied extensively in \cite{sss}.  In general the parameters of a consistent family of sticky Brownian motions can represented in terms of a splitting measure $\nu$  as
\begin{equation}
\theta(k:l)= \int_0^1 q^{k-1}(1-q)^{l-1} \nu(dq)
\end{equation}
For the parameters $\theta(k:l)$ given 
by Theorem 1, the measure $\nu$ is given by
\begin{equation}
\label{nu}
\nu(dq)=\frac{q(1-q)}{\phi(\Phi^{-1}(q))}dq
\end{equation}
where $\phi$ denotes the standard Gaussian density, and $\Phi$ the corresponding distribution function.
The  right and left speeds of the flow are defined by 
\begin{equation}
\beta_+= 2\int_0^1 q^{-1}\nu(dq) \text{ and } \beta_-=- 2\int_0^1 (1-q)^{-1}\nu(dq) 
\end{equation}
and with $\nu$ given by \eqref{nu} are both infinite.
Thus according to the Theorem 2.7 of \cite{sss}, the support of the  corresponding kernels is  almost surely equal to ${\mathbf R}$.
However, by Theorem 2.8 of \cite{sss}, for any $s \le t$ and $x$ the measure $K_{s,t}(x, \cdot)$ is purely atomic. This seems consistent with the simulations  which show  the mass becomeing more concentrated as the parameters $a$ and $b$  increase and decrease  respectively. It is less evident from these simulations that, in the limit, the set of points carrying the mass is  dense.
\vspace{.2in}

{\bf Acknowledgements.} This work was started during a visit to Universit\'{e}  Paris-Sud, and I would like to thanks the mathematics department there,  and Yves Le Jan in particular, for their hospitality. I'd also less to thank Peter Windridge for his help with writing the R code for the simulations.

\begin{bibdiv}
\begin{biblist}
\bib{bass}{article}{
  title={A stochastic differential equation with a sticky point},
  author={Bass, Richard F},
  journal={Electron. J. Probab},
  volume={19},
  number={32},
  pages={1--22},
  year={2014},
}

\bib{ch}{article}{
  title={On the nonexistence of a strong solution in the boundary problem for a sticky Brownian motion},
  author={Chitashvili, R.J.},
  year={1989},
  publisher={Centrum voor Wiskunde en Informatica},
}

\bib{ep}{article}{
  title={Stochastic differential equations for sticky Brownian motion},
  author={Engelbert, Hans-J{\"u}rgen},
author={Peskir, Goran},
  journal={Stochastics An International Journal of Probability and Stochastic Processes},
  year={2014},
  publisher={Taylor \& Francis}
}

\bib{gh}{article}{
   author={Gaw{\c{e}}dzki, Krzysztof},
   author={Horvai, P{\'e}ter},
   title={Sticky behavior of fluid particles in the compressible Kraichnan
   model},
   journal={J. Statist. Phys.},
   volume={116},
   date={2004},
   number={5-6},
   pages={1247--1300},
}
\bib{hw1}{article}{
 author={Howitt, C.J.},
 author={Warren, J.},
 title={Consistent families of Brownian motions and stochastic flows of kernels},
  journal={Ann. Probab.},
 volume={37},
   date={2009},
   number={4},
   pages={1237--1272},
 }
\bib{hw2}{article}{
 author={Howitt, C.J.},
 author={Warren, J.},
 title={Dynamics for the Brownian web and the Erosion flow},
 journal={Stoch. Proc. Appl.},
 volume={119},
   date={2009},
   pages={2028--2051},
}

\bib{lejan1}{article}{
   author={Le Jan, Y.},
   author={Raimond, O.},
   title={Integration of Brownian vector fields},
   journal={Ann. Probab.},
   volume={30},
   date={2002},
   number={2},
   pages={826--873},
}
\bib{lejan2}{article}{
   author={Le Jan, Y.},
   author={Raimond, O.},
   title={Flows, coalescence and noise},
   journal={Ann. Probab.},
   volume={32},
   date={2004},
   number={2},
   pages={1247--1315},
}
\bib{lejan4}{article}{
journal={ Probab. Theory Related Fields},
volume={129},
number={1},
title={ Sticky flows on the circle and their noises},
date={2004},
author={Le  Jan, Y.},
author={Raimond, O.},
pages={ 63--82},
}

\bib{lejan3}{article}{
   author={Le Jan, Y.},
   author={Lemaire, S.},
   title={Products of Beta matrices and sticky flows},
   journal={Probab. Theory  Related Fields},
   volume={130},
   date={2004},
   number={1},
   pages={109--134},
}

\bib{pes}{article}{
title={On Boundary Behaviour of One-Dimensional Diffusions: From Brown to Feller and Beyond},
author={Peskir, G.},
year={2014},
}
\bib{sss}{article}{
author={Schertzer, E.},
author={Sun, R.},
author={Swart, J.M.},
title={Stochastic Flows in the Brownian Web and Net},
journal= {Mem. Amer. Math. Soc.},
volume= {227},
 year= {2014},
 number= {1065},
}
\bib{w1}{article}{
  title={Branching processes, the Ray-Knight theorem, and sticky Brownian motion},
  author={Warren, Jonathan},
  booktitle={S{\'e}minaire de Probabilit{\'e}s XXXI},
  pages={1--15},
  year={1997},
  publisher={Springer}
}

\bib{w2}{article}{
 author={Warren, J.},
 title={An elliptic pde with convex solutions},
 eprint={arXiv:1407.3985 },
 }

\end{biblist}
\end{bibdiv}
\end{document}